\newtheorem{proposition}{Proposition}[section]
\newtheorem{theorem}[proposition]{Theorem}
\newtheorem{conjecture}[proposition]{Conjecture}
\theoremstyle{definition}
\newtheorem{remark}[proposition]{Remark}
\newtheorem{definition}[proposition]{Definition}
\newtheorem{proposition-definition}[proposition]{Proposition-Definition}
\title{Finiteness of K-moduli compactifications}
\author{Chuyu Zhou}
\address{School of Mathematical Sciences, Xiamen University, Siming South Road 422, Xiamen, Fujian 361005, China.}
\email{chuyuzhou@xmu.edu.cn, chuyuzhou1@gmail.com}
\date{} 
\thanks{2010 
	    \emph{Mathematics Subject Classification}: 14J45.
	    \newline
	    \indent 
		\emph{Keywords}: Log Fano pair, K-stability, K-moduli, Wall crossing.
        \newline
		\indent
		\emph{Competing interests}:  The authors have no conflict of interest to declare.
		}
\newcommand{\Fut}{{\rm{Fut}}}
\newcommand{\ord}{{\rm {ord}}}
\newcommand{\vol}{{\rm {vol}}}
\newcommand{\red}{{\rm {red}}}
\newcommand{\wt}{{\rm {wt}}}
\newcommand{\Hilb}{{\rm {Hilb}}}
\newcommand{\PGL}{{\rm {PGL}}}
\newcommand{\CM}{{\rm {CM}}}
\newcommand{\Kss}{{\rm {Kss}}}
\newcommand{\WLF}{{\rm {WLF}}}
\newcommand{\LF}{{\rm {LF}}}
\newcommand{\Kps}{{\rm {Kps}}}
\newcommand{\RC}{{\rm {RC}}}
\newcommand{\dsm}{{\rm {dsm}}}
\newcommand{\bA}{\mathbb{A}}
\newcommand{\bC}{\mathbb{C}}
\newcommand{\bG}{\mathbb{G}}
\newcommand{\bN}{\mathbb{N}}
\newcommand{\bP}{\mathbb{P}}
\newcommand{\bQ}{\mathbb{Q}}
\newcommand{\bR}{\mathbb{R}}
\newcommand{\bZ}{\mathbb{Z}}
\newcommand{\mA}{\mathcal{A}}
\newcommand{\mB}{\mathcal{B}}
\newcommand{\mD}{\mathcal{D}}
\newcommand{\mG}{\mathcal{G}}
\newcommand{\mL}{\mathcal{L}}
\newcommand{\mM}{\mathcal{M}}
\newcommand{\mP}{\mathcal{P}}
\newcommand{\mX}{\mathcal{X}}
\newcommand{\mY}{\mathcal{Y}}
\newcommand{\fD}{\mathbf{D}}
\newcommand{\fX}{\mathbf{X}}
\newcommand{\fa}{\mathbf{a}}
\newcommand{\fx}{\mathbf{x}}
\begin{document}

\begin{abstract}
In this note, we aim to prove the finite semi-algebraic chamber decomposition theorem for K-semi(poly)stability under the assumption of the log boundedness of K-semistable degenerations. This boundedness assumption is naturally arising from the finiteness of K-moduli compactifications.

\end{abstract}

\maketitle

\setcounter{tocdepth}{1}

\tableofcontents

We work over the field of complex numbers  $\bC$ throughout the article.

\section{Introduction}\label{sec: intro}

Let $\pi: (\mX, \mD)\to T$ be a log smooth morphism over a smooth base $T$, where $\mD$ is a smooth divisor on $\mX$. Suppose
$(\mX_t, x\mD_t)$ is a K-(poly)stable log Fano pair for any $t\in T$ and any $x\in [0,1)$. By \cite{LXZ22} complemented by \cite{LZ24a}, 
for any given $c\in [0,1)$, there exists an Artin stack of finite type $\mM_c$ generically parametrizing the fibers of $\pi_c: (\mX, c\mD)\to T$, and $\mM_c$ descends to a proper good moduli space $M_c$, which is a K-moduli compactification of $T$. Write $\phi_c: \mM_c\to M_c$ to be the descending map. We pose the following question: 

\

\textit{Question: Is the set of K-moduli compactifications $K(T):=\{M_c\ |\  c\in [0,1)\}$ finite?}

\

This question is naturally related to the wall crossing property of K-moduli. In the case where $\mD$ is proportional  to $-K_{\mX}$ over $T$, there is a finite rational interval chamber decomposition 
$0=c_0<c_1<c_2<...<c_k<c_{k+1}=1 $ such that $\phi_c: \mM_c\to M_c$ does not depend on $c\in (c_i, c_{i+1})$ for any given index $0\leq i\leq k$ (e.g. \cite{ADL19, Zhou23}). This clearly implies that the set $K(T)$ is finite. However, the question is much more subtle if $\mD$ is not proportional to $-K_{\mX}$ over $T$. In the non-proportional case, if $\vol(-K_{\mX_t}-\mD_t)>0$, we still have a finite interval chamber decomposition of $[0,1)$ such that $\phi_c: \mM_c\to M_c$ does not change restricted to each open interval; if $\vol(-K_{\mX_t}-\mD_t)=0$ while we only focus on $c\in (0,1-\epsilon]$ for some $0<\epsilon\ll 1$, we also have a finite interval chamber decomposition of $[0, 1-\epsilon)$ such that $\phi_c: \mM_c\to M_c$ does not change restricted to each open interval (e.g. \cite{LZ24b}). The different point is that the finite interval chamber decompositions in non-proportional case may not be rational (e.g. \cite{DJKHQ24}). Above all,  the difficulty arises when $\vol(-K_{\mX_t}-\mD_t)=0$ and we consider all $c\in [0,1)$. Regarding this, the non-proportional wall crossing theory developed in \cite{LZ24b} is not complete since we do not touch the case when the coefficient is close to the Calabi-Yau boundary.
Towards this difficulty, the expected solution should be as follows: \textit{there exists a finite interval chamber decomposition of $[0,1)$ such that $\phi_c: \mM_c\to M_c$ does not change restricted to each open interval.} This solution will totally address the finiteness question we pose at first.

Suppose the set $K(T)$ is finite, then the set $\cup_{c\in [0,1)}\mM_c(\bC)$ is clearly log bounded. This actually indicates the key ingredient in the program of developing a complete non-proportional wall crossing theory, i.e. the set of all K-semistable degenerations (as the coefficient varies) should be log bounded. We divide the program into two steps:

\

\textit{Step 1: Show that the set of K-semistable degenerations, i.e. $\cup_{c\in [0,1)}\mM_c(\bC)$, is log bounded;}

\

\textit{Step 2: Develop a complete non-proportional wall crossing theory for K-moduli under the log boundedness in step 1.}

\

In this note, we will work in a more general setting and complete step 2. 
Throughout, we fix two positive integers $d, k$ and a rational polytope $P\subset [0,1]^k$. We also fix a log bounded set $\mG$ of couples depending on $(d, k, P)$ such that the couple $(X, \sum_{j=1}^kD_j)$ in $\mG$ satisfies the following conditions:
\begin{enumerate}
\item $X$ is a normal projective variety of dimension $d$;
\item $D_j, 1\leq j\leq k,$ are effective Weil divisors on $X$;
\item there exists a vector $(c_1,...,c_k)\in P$ such that $(X, \sum_{j=1}^kc_jD_j)$ is a \textit{K-semistable weak log Fano pair}.
\end{enumerate}
We emphasize that the set $\mG$ is given log boundedness (one could for example take it as the set of fibers of $\pi$ given at the beginning). 

\begin{definition}\label{def: dsm}
For any $(X, \sum_{j=1}^kD_j)\in \mG$, we say $(Y, \sum_{j=1}^kB_j)$ is a  \textit{dominant small modification} of $(X, \sum_{j=1}^kD_j)$ if $Y$ is a normal projective variety and there is a small morphism $(Y, \sum_{j=1}^kB_j)\to (X, \sum_{j=1}^kD_j)$, where $\sum_{j=1}^kB_j$ is the birational transform of $\sum_{j=1}^kD_j$. Denote $\mG^{\dsm}$ to be the set of all dominant small modifications of all objects in $\mG$. It is clear that $\mG\subset \mG^{\dsm}$. If $\mG=\mG^{\dsm}$, we say $\mG$ is \textit{complete under dominant small modifications.} 
\end{definition}

Usually it is not hard to prove the log boundedness of $\mG^{\dsm}$ under the log boundedness of $\mG$ (e.g. \cite[Theorem 1.7]{CLZ25}). For example, if we take $\mG$ to be the set of fibers of $\pi: (\mX, \mD)\to T$ given at the beginning, we clearly have $\mG=\mG^{\dsm}$. So it is harmless to assume $\mG=\mG^{\dsm}$ at first, and we will technically benefit from this assumption (e.g. Proposition \ref{prop: uni gorenstein}).
\textit{From now on, we will always tacitly assume that the given log bounded set $\mG$ is complete under dominant small modifications}. We also define the K-moduli completion $\widehat{\mG}$ of the set $\mG$ as follows.

\begin{definition}\label{def: K-complete}
We say a couple $(Y, \sum_{j=1}^kB_j)$ is contained in $\widehat{\mG}$ if and only if there exists a family of couples $(\mY, \sum_{j=1}^k\mB_j)\to C\ni 0$ over a smooth pointed curve such that the following conditions are satisfied:
\begin{enumerate}
\item $(\mY_t, \sum_{j=1}^k \mB_{j,t})\in \mG$ for $t\ne 0$;
\item $(\mY_0, \sum_{j=1}^k \mB_{j,0})\cong (Y, \sum_{j=1}^kB_j)$;
\item $(\mY, \sum_{j=1}^k c_j\mB_j)\to C$ is an $\bR$-Gorenstein family of K-semistable weak log Fano pairs for some $(c_1,...,c_k)\in P$, i.e. $-K_{\mY}-\sum_{j=1}^kc_j\mB_j$ is $\bR$-Cartier and every fiber $(\mY_t, \sum_{j=1}^kc_j\mB_{j,t})$ is a K-semistable weak log Fano pair.
\end{enumerate}
\end{definition}

A priori, we do not know whether the set $\widehat{\mG}$ is log bounded, though the set $\mG$ is assumed to be log bounded. However, we expect this is true.

\begin{conjecture}{\rm (\cite{LZ24b})}\label{conj: bdd}
The set $\widehat{\mG}$ is log bounded.
\end{conjecture}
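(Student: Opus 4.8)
Conjecture~\ref{conj: bdd} is exactly the open Step~1 of the program above, so what follows is a plan of attack whose last step is genuinely open. The plan is to stratify $P$ by the log anti-canonical volume. For $(Y,\sum_j B_j)\in\widehat{\mG}$ witnessed by a family $(\mY,\sum_j c_j\mB_j)\to C\ni 0$ as in Definition~\ref{def: K-complete}, flatness of the family forces $\dim Y=d$, and, since $-K_{\mY/C}-\sum_j c_j\mB_j$ is $\bR$-Cartier with nef restriction to every (weak log Fano) fiber, the log anti-canonical volume $v:=\vol(-K_{\mY_t}-\sum_j c_j\mB_{j,t})$ is independent of $t\in C$; it equals $\vol(-K_X-\sum_j c_jD_j)$ for the general fiber $(X,\sum_j D_j)\in\mG$, and is bounded above by a constant $M=M(\mG,P)$. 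The couple $(Y,\sum_j B_j)$ is a flat degeneration of $(X,\sum_j D_j)\in\mG$, so the only obstruction to its boundedness is that the natural limiting ``polarization'' $-K_{\mY_t}-\sum_j c_j\mB_{j,t}$ is merely nef and big on the central fiber rather than ample; this obstruction is controlled by how close $\vec c$ is to the Calabi--Yau boundary $\Sigma:=\{\vec c\in P: v(\vec c)=0\}$.

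Fix $\epsilon>0$. On the locus where $v\ge\epsilon$, I would proceed as follows. The pair $(Y,\sum_j c_jB_j)$ is klt with $-K_Y-\sum_j c_jB_j$ nef and big, hence semiample by the base-point-free theorem; let $f\colon Y\to Y^{\ac}$ be the induced morphism to the log anti-canonical model. Then $(Y^{\ac},\sum_j c_jB_j^{\ac})$ is a genuine K-semistable log Fano pair (K-semistability being a crepant birational invariant), of dimension $d$, log anti-canonical volume in $[\epsilon,M]$, and coefficient vector in the rational polytope $P$. By the boundedness of K-semistable log Fano pairs with fixed dimension, volume bounded below and coefficients in a fixed rational polytope --- Jiang's boundedness theorem and its refinements, together with the K-moduli theory for varying coefficients (\cite{LXZ22,ADL19}) --- the couples $(Y^{\ac},\sum_j B_j^{\ac})$ are log bounded; recovering $Y$ from $Y^{\ac}$ by a crepant birational contraction and using the completeness of $\mG$ under dominant small modifications together with a uniform Gorenstein-type bound (Proposition~\ref{prop: uni gorenstein}, \cite[Theorem~1.7]{CLZ25}) then shows that $\{(Y,\sum_j B_j)\in\widehat{\mG}: v\ge\epsilon\}$ is log bounded. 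Equivalently, on this locus the statement reduces to the finite wall-crossing away from the Calabi--Yau boundary established in \cite{LZ24b}.

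The remaining, and genuinely hard, regime is $v\to 0$, i.e.\ $\vec c$ approaching $\Sigma$. Here the plan is: (i) prove a relative, family version of the finite semi-algebraic chamber decomposition of this note, so that $\Sigma$ and the nearby K-semistable locus vary in a semi-algebraic and bounded way over $\mG$; (ii) as $\vec c\to\vec c_0\in\Sigma$, pass to the limiting klt log Calabi--Yau pair $(Y,\sum_j c_{0,j}B_j)$, which inherits a stability as a limit of K-semistable objects; (iii) bound such log Calabi--Yau pairs --- using results towards boundedness of Calabi--Yau pairs together with the fact that the $B_j$ originate from the bounded family $\mG$ --- and (iv) deduce that the couples $(Y,\sum_j B_j)$ stay bounded as $\vec c\to\Sigma$ by ``opening up'' from the bounded log Calabi--Yau limit. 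Step (iii) --- boundedness of the relevant log Calabi--Yau pairs --- is the crux: it is known only in special cases and, in the generality required here, is open. This is precisely why the present note assumes the log boundedness of $\widehat{\mG}$ rather than proving it, and carrying out (iii) is, to my knowledge, the main missing ingredient.
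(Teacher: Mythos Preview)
The paper does not prove Conjecture~\ref{conj: bdd}: it is stated as an open conjecture (attributed to \cite{LZ24b}), and immediately afterwards the paper writes ``This boundedness seems tricky and we will treat it in future works. In this note, we just work through with the assumption that $\widehat{\mG}$ is log bounded.'' Every subsequent result (Propositions~\ref{prop: finite kss domain}, \ref{prop: second stra}, \ref{prop: uni gorenstein}, \ref{prop: semi-alg-2}, Theorems~\ref{thm: kps cd}, \ref{thm: wall crossing}) takes the log boundedness of $\widehat{\mG}$ as a standing hypothesis. So there is no ``paper's own proof'' to compare your proposal against.

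You correctly identify this, and your write-up is honestly labeled as a plan of attack rather than a proof. The two-regime split (volume $\ge\epsilon$ versus volume $\to 0$) is exactly the natural one, and your treatment of the first regime is in line with what the paper cites from \cite{LZ24b}: bound the anti-canonical models via Jiang-type boundedness for K-semistable Fanos with volume bounded below, then lift back to the weak Fano $Y$. One small caution there: the map $Y\to Y^{\ac}$ need not be small in general, so ``recovering $Y$'' is not literally covered by the dominant-small-modification completeness of $\mG$ alone; you would need to bound all crepant models over a bounded family of log Fanos, which is a stronger (though still believable) statement than what \cite[Theorem~1.7]{CLZ25} is invoked for in the paper.

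Your assessment of the second regime is accurate: the crux is a boundedness statement for the limiting log Calabi--Yau pairs as $\vec c\to\Sigma$, and this is genuinely open in the required generality. That is precisely why the paper isolates Conjecture~\ref{conj: bdd} as an assumption and devotes itself to Step~2.
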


This boundedness seems tricky and we will treat it in future works. In this note, we just work through with the assumption that $\widehat{\mG}$ is log bounded. Under this log boundedness, we have the following wall crossing property for K-semi(poly)stability.

\begin{theorem}{\rm (=\ Theorem \ref{thm: kps cd})}\label{mainthm: cd}
Under the assumption that $\widehat{\mG}$ is log bounded, there exists a finite chamber decomposition $P=\cup_i A_i$ such that $A_i$'s are all semi-algebraic sets and mutually disjoint; moreover, for any $(X, \sum_{j=1}^kD_j)\in \widehat{\mG}$ and any piece $A_i$, the K-semistability and K-polystability of $(X, \sum_{j=1}^kx_jD_j)$ do not depend on $(x_1,...,x_k)\in A_i$.
\end{theorem}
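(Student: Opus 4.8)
The plan is to reduce the statement, via the log boundedness of $\widehat{\mG}$, to a finiteness property of semi-algebraic families of functions. Since $\widehat{\mG}$ is log bounded and $P$ is compact, the couples $(X,\sum_j x_jD_j)$ with $(X,\sum_jD_j)\in\widehat{\mG}$ and $(x_1,\dots,x_k)\in P$ form a log bounded set; hence there are a reduced scheme $S$ of finite type and a family $(\mX,\sum_{j=1}^k\mD_j)\to S$ whose fibres exhaust $\widehat{\mG}$, and after stratifying $S$ and invoking Proposition \ref{prop: uni gorenstein} we may assume a uniform Gorenstein index $N$, so that $-K_{\mX_s}-\sum_j x_j\mD_{j,s}$ is $\bR$-Cartier for every $s\in S$ and every $(x_j)\in P$. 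Stratifying $S$ further, we arrange that all intersection numbers among $-K_{\mX/S}$ and the $\mD_j$ are locally constant; in particular $(x_j)\mapsto\vol(-K_{\mX_s}-\sum_j x_j\mD_{j,s})$ is a fixed polynomial on each stratum, and ``$(\mX_s,\sum_j x_j\mD_{j,s})$ is a weak log Fano pair'' (klt, and $-K_{\mX_s}-\sum_j x_j\mD_{j,s}$ nef and big) becomes a fixed semi-algebraic condition on $(x_j)\in P$ on each stratum.

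Next I would replace ``test all valuations'' by ``test a bounded family of divisorial valuations''. By the valuative criterion of Fujita--Li, a weak log Fano pair $(X,\Delta)$ is K-semistable iff $\delta(X,\Delta)\geq 1$, and when $\delta(X,\Delta)<1$ the infimum defining $\delta$ is computed by a divisorial valuation $E$ with $A_{X,\Delta}(E)<S_{X,\Delta}(E)$, i.e.\ $\beta_{X,\Delta}(E)<0$. By boundedness of complements and the attendant control of $\delta$-computing valuations in log bounded families (Birkar; Blum--Liu--Xu; Xu--Zhuang), there is a constant $M=M(\widehat{\mG},P)$ such that only valuations $E$ over $\mX_s$ with $A_{\mX_s}(E)\leq M$ need be tested; these are extracted by bounded birational modifications and thus form a log bounded family, packaged as a finite-type morphism $\rho:T\to S$ together with a prime divisor $\mE$ on a family of birational models $\mu:\mY\to\mX\times_S T$. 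Thus K-semistability of $(\mX_s,\sum_j x_j\mD_{j,s})$ is equivalent to: it is a weak log Fano pair and $\beta_{\mX_s,\sum_j x_j\mD_{j,s}}(E_t)\geq 0$ for every $t\in\rho^{-1}(s)$. For K-polystability I would additionally use that special degenerations of K-semistable members of $\widehat{\mG}$ again lie in $\widehat{\mG}$, and that the automorphism groups of the fibres are jointly bounded; together with reductivity of $\Aut$ and uniqueness of the K-polystable degeneration, this detects ``product-type'' of a wall-valuation $E_t$ by a constructible condition on $(s,x)$.

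Now I would establish semi-algebraicity. After a further (finite) stratification of $S$ and $T$ making the combinatorial data of $(\mY\to\mX\times_S T,\mE)$ constant --- the intersection numbers, the log discrepancies $A_{\mX_s}(E_t)$, the multiplicities $\ord_{E_t}(\mD_{j,s})$, and the chamber structure of the Zariski-type decomposition on the models $\mY_t$ --- the quantity $A_{\mX_s,\sum_j x_j\mD_{j,s}}(E_t)=A_{\mX_s}(E_t)-\sum_j x_j\ord_{E_t}(\mD_{j,s})$ is a fixed affine function of $(x_j)$, while
\[
S_{\mX_s,\sum_j x_j\mD_{j,s}}(E_t)=\frac{1}{\vol(-K_{\mX_s}-\sum_j x_j\mD_{j,s})}\int_0^\infty\vol\big(\mu_t^*(-K_{\mX_s}-\sum_j x_j\mD_{j,s})-u\mE_t\big)\,du
\]
has a piecewise-polynomial integrand with linear, stratum-fixed walls; hence $\beta=A-S$ is a fixed semi-algebraic (indeed piecewise rational) function of $(x_j)\in P$ on each stratum, independent of $t$ inside it. Refining $S=\sqcup_\gamma S_\gamma$ so that the set of strata of $T$ lying over a point is constant on each $S_\gamma$, the K-semistable locus $P^{\mathrm{ss}}_\gamma:=\{(x_j)\in P:\ (\mX_s,\sum_j x_j\mD_{j,s})\text{ is K-semistable}\}$ (for $s\in S_\gamma$) is a well-defined semi-algebraic subset of $P$ depending only on $\gamma$; the same argument, using the degeneration input of Step~2, yields finitely many semi-algebraic K-polystable loci $P^{\mathrm{ps}}_\delta\subset P$. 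Taking $\{A_i\}$ to be the atoms of the finite Boolean algebra of subsets of $P$ generated by the $P^{\mathrm{ss}}_\gamma$ and $P^{\mathrm{ps}}_\delta$ gives finitely many pairwise disjoint semi-algebraic pieces covering $P$; since each $(X,\sum_jD_j)\in\widehat{\mG}$ is a fibre over some $S_\gamma$ and each $A_i$ lies entirely inside or outside each $P^{\mathrm{ss}}_\gamma$ (resp.\ $P^{\mathrm{ps}}_\delta$), the K-semistability and K-polystability of $(X,\sum_j x_jD_j)$ are constant on $A_i$, as required.

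The main obstacle is the interplay of the last two steps. First, one needs a single $M$ controlling the $\delta$-computing valuations uniformly over the rational polytope $P$; this rests on the uniform boundedness of complements in a log bounded family together with the bound $A_{X,\Delta}(E)\leq S_{X,\Delta}(E)$ for a $\delta$-computing $E$ when $\delta\leq 1$. More seriously, the semi-algebraicity in Step~3 hinges on the claim that $\vol(\mu_t^*(-K-\sum_j x_j\mD_j)-u\mE_t)$ is piecewise polynomial with only finitely many combinatorial types across the bounded family $T$ --- that is, that the Zariski-type (Nakayama--Zariski) decomposition varies semi-algebraically in families --- so that $\beta$ is semi-algebraic of uniformly bounded complexity. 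Securing this variation statement, and the analogous control needed to make K-polystability semi-algebraic, is the principal technical burden of the proof.
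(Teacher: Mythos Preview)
Your approach is genuinely different from the paper's and, while plausible in outline, has a real gap at the point you yourself flag.

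The paper does \emph{not} attempt to show that the $\beta$-invariant $\beta_{X,\sum_j x_jD_j}(E)$ is a semi-algebraic function of $(x_j)$ by controlling volume functions and Zariski chambers on birational models. Instead, it works via special test configurations and the CM line bundle. The key observation (Proposition-Definition~\ref{def: R-CM}) is that for an $\bR$-Gorenstein family of log Fano pairs over a base $Z$ with $(x_j)$ varying in a face $F^\circ$, the CM line bundle $\lambda_{\CM}(\fx)$ is a finite $\bQ$-polynomial combination $\sum_q g_q(\fx)\cdot M_q$ of fixed line bundles $M_q$, so the generalized Futaki invariant of any special test configuration is automatically a rational function of $(x_j)$ of uniformly bounded complexity. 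The paper then (Proposition~\ref{prop: second stra}) packages all relevant special degenerations into Hilbert-scheme-type bases carrying products of $\PGL$-actions, reduces (Proposition~\ref{prop: enough deg}) to testing only special test configurations whose central fibre lies in $\widehat{\mG}$, and (Proposition~\ref{prop: semi-alg-2}) shows that on each face $F^\circ$ only finitely many one-parameter subgroups need to be checked. This makes $\Kss(X,\sum_jD_j)_{F^\circ}$ semi-algebraic without ever confronting the variation of Zariski decompositions; combined with the finiteness of K-semistable domains (Proposition~\ref{prop: finite kss domain}), the chamber decomposition follows. The K-polystability part is then a short argument: if $(X,\sum_j c_jD_j)$ is K-polystable but $(X,\sum_j b_jD_j)$ is not for some $(b_j)$ in the same chamber, the K-polystable degeneration at $(b_j)$ extends (by Proposition~\ref{prop: uni gorenstein}) to a non-product-type K-semistable degeneration at $(c_j)$, contradicting K-polystability there.

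Your route via bounded destabilizing divisors is natural, but the obstacle you name is exactly why the paper takes the CM-line-bundle route. Making $u\mapsto\vol\big(\mu_t^*(-K-\sum_j x_jD_j)-uE_t\big)$ piecewise polynomial with uniformly bounded combinatorial type across a bounded family $T$ amounts to bounding the Mori chamber structure of the models $\mY_t$; even granting that each $\mY_t$ is of Fano type (hence a Mori dream space), a uniform bound on the number of Mori chambers across a bounded family is not supplied by your argument, and this is precisely what is needed to make $\{\beta_t(x)\}_t$ a semi-algebraic family and then invoke quantifier elimination. There is also a smaller but genuine issue in your Step~1: Proposition~\ref{prop: uni gorenstein} does \emph{not} yield that $-K_{\mX_s}-\sum_j x_j\mD_{j,s}$ is $\bR$-Cartier for every $(x_j)\in P$; it only yields this on the open faces $F^\circ$ of a rational polytope chamber decomposition of $P$, and only for families that are already $\bR$-Gorenstein K-semistable at some point of $F^\circ$. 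The $\bR$-Cartier domain of a couple is in general a proper sub-polytope of $P$, which is why the paper works face by face throughout Sections~5 and~6.
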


From the above theorem, each chamber $A_i$ gives one proper K-moduli space, thus there are only finitely many K-moduli spaces appearing as we change the coefficients. This gives a positive answer to the finiteness question put at the beginning, under the assumption of the log boundedness of the set of K-semistable degenerations. The strategy towards Theorem \ref{mainthm: cd} is to prove that the number of K-semistable domains is finite and each K-semistable domain is semi-algebraic, under the log boundedness of $\widehat{\mG}$ (see Proposition \ref{prop: finite kss domain} and Proposition \ref{prop: semi-alg-2}).

\begin{remark}
We finally make a comment towards the above theorem. In \cite{LZ24b}, the above result is worked out under the following volume condition: there exists a positive real number $\epsilon_0$ such that $\vol(-K_X-\sum_{j=1}^kx_jD_j)\geq \epsilon_0$ for any $(X, \sum_{j=1}^kD_j)\in \widehat{\mG}$ and any $(x_1,...,x_k)\in P$. The volume condition automatically implies that $\widehat{\mG}$ is log bounded (e.g. \cite[Section 7]{LZ24b}). However, we usually lose the volume condition as the coefficients approach the boundary of $P$.  The above theorem can be viewed as a conceptual generalization of the result proved in \cite{LZ24b}, which reduces the non-proportional wall crossing problem to the boundedness problem, i.e. Conjecture \ref{conj: bdd}.
\end{remark}

\noindent
\subsection*{Notation and Conventions}

\begin{itemize} 
\item A polytope in $\bR^k$ means a compact subset $P$ cut out by finitely many inequalities $H_i\geq 0$, where each $H_i$ determines a hyperplane in $\bR^k$; a face of the polytope $P$ is a subset of $P$ determined by the intersection of some hyperplanes $H_i=0$; 
\item We say the polytope $P\subset \bR^k$ is rational if all of its vertices admit rational coordinates;
\item For a polytope $P\subset \bR^k$, we use $P^\circ$  to denote the interior part of $P$ under the relative topology; 
\item For any subset $A\subset \bR^k$, we use $A(\bQ)$ to denote the set of rational points in $A$;
\item Given a polytope $P\subset \bR^k$, a finite polytope chamber decomposition of $P$ means a finite union $P=\cup_s P_s$, where each $P_s$ is a polytope and $P_i^\circ \cap P_j^\circ$ is empty for any different $i,j$; if each chamber $P_s$ is rational, we say it is a finite rational polytope chamber decomposition; 
\item Given a polytope $P\subset \bR^k$, a finite semi-algebraic chamber decomposition of $P$ means a finite union $P=\cup_s A_s$, where each chamber $A_s$ is semi-algebraic and any two chambers are disjoint;
\item When we mention a point on the base of a family of varieties, it always means a closed point; and a fiber of the family always means the fiber over a closed point;
\item A Weil divisor means a $\bZ$-divisor which is not necessarily irreducible;
\item A constructible stratification of a variety $T$ means decomposing $T$ into a finite disjoint union where each piece is locally closed.
\end{itemize}

\noindent
\subsection*{Acknowledgement}

The author is supported by an NSFC grant (No. 12501058) and a grant from Xiamen University (No. X2450214).

\section{Preliminaries}

We say $(V, B)$ is a \textit{couple} if $V$ is a normal projective variety and $B$ is an effective Weil divisor. We say $(V, B)$ is a \textit{log pair} if $V$ is a normal projective variety and $B$ is an effective $\bR$-divisor such that $K_V+B$ is $\bR$-Cartier. We say a log pair $(V, B)$ is \textit{log Fano} (resp. \textit{weak log Fano}) if $(V, B)$ admits klt singularities and $-K_V-B$ is ample (resp. big and nef).
We say $(X, \Delta)\to T$ is a family of couples if $X\to T$ is a projective morphism between finite type schemes and $\Delta$ is an effective divisor on $X$ such that every fiber $(X_t, \Delta_t)$ is a couple. Given a family of log Fano pairs (resp. weak log Fano pairs) $(X, \Delta)\to T$ over a smooth base $T$, i.e. every fiber $(X_t, \Delta_t)$ is a log Fano pair (weak log Fano pair), we say it is an $\bR$-($\bQ$-)Gorenstein family if $-K_{X/T}-\Delta$ is $\bR$-($\bQ$-)Cartier.

For a comprehensive understanding on the K-stability of log Fano pairs with $\bQ$-coefficients, we refer to \cite{Xu21} \cite{Xu25}. For some basic understanding on the K-stability of log Fano pairs with $\bR$-coefficients, we refer to \cite{LZ24a}. Below we just briefly survey some elementary concepts and results that will be applied in this note.

\subsection{K-stability}

The following definition is inspired by \cite{Fuj19} and  \cite{Li17}.

\begin{definition} (Fujita--Li)\label{def: Fujita-Li}
For a given log Fano pair $(X, \Delta)$, we say it is \textit{K-semistable} if 
$\beta_{X, \Delta}(E):=A_{X, \Delta}(E)-S_{X, \Delta}(E)\geq 0$ for any prime divisor $E$ over $X$, where 
$$A_{X, \Delta}(E):=\ord_{E}\left(K_Y-f^*(K_X+\Delta)\right)+1, $$
$$S_{X, \Delta}(E):=\frac{1}{\vol(-K_X-\Delta)}\int_0^{\infty}\vol(-f^*(K_X+\Delta)-tE){\rm d} t .$$
Here $f: Y\to (X, \Delta)$ is a proper log resolution such that $E$ is a prime divisor on $Y$. The notation $\beta_{X, \Delta}(E)$ is also called the Fujita-Li invariant of $(X, \Delta)$ with respect to $E$. For a given weak log Fano pair, we say it is K-semistable if its anti-canonical model (which is a log Fano pair) is K-semistable.
\end{definition}

\begin{definition}\label{def: kps}
Let $(X, \Delta)$ be a K-semistable log Fano pair. We say it is  \textit{K-polystable} if any special test configuration of $(X, \Delta; -K_X-\Delta)$ with a K-semistable central fiber is of product type.
\end{definition}

By \cite{LWX21}, a K-semistable log Fano pair with $\bQ$-coefficients always admits a K-polystable degeneration via a special test configuration. The same result applies to K-semistable log Fano pairs with $\bR$-coefficients by \cite{LZ24a}.

Before Definition \ref{def: Fujita-Li}, K-semistability of a log Fano pair is tested by the non-negativity of the generalized Futaki invariants of special test configurations (see \cite{LX14},\cite{LZ24a}). The generalized Futaki invariant can be reformulated by the CM-line bundle.

Let $Q\subset [0,1]^k$ be a rationa polytope and $\pi: (\mX, \sum_{j=1}^k\mD_j)\to T$ is a family of couples of relative dimension $d$ over a smooth base $T$. Suppose $\pi_\fx: (\mX, \sum_{j=1}^kx_j\mD_j)\to T$ is an $\bR$-Gorenstein family of log Fano pairs for any $\fx:=(x_1,...,x_k)\in Q$. By the work of Mumford-Knudsen \cite{KM76}, for any $\fx\in Q(\bQ)$, there exist (uniquely determined) $\bQ$-line bundles $\lambda_i(\fx) \ (i=0,1,...,d+1)$ on $T$ such that we have the following expansion for all sufficiently divisible $m\in \bN$:
$$\det \pi_*(\mL(\fx)^{\otimes m})= \lambda_{d+1}(\fx)^{\binom{m}{d+1}}\otimes\lambda_d(\fx)^{\binom{m}{d}}\otimes...\otimes\lambda_1(\fx)^{\binom{m}{1}}\otimes\lambda_0(\fx).$$
For convenience, we write $\mD(\fx):=\sum_{j=1}^kx_j\mD_j$ and $\mL(\fx):=-K_{\mX/T}-\sum_{j=1}^kx_j\mD_j$.

\begin{definition}{\rm (\cite[Definition 9.29]{Xu25})}\label{def: Q-CM}
For any given rational vector $\fx\in Q(\bQ)$, the CM-line bundle for the family $\pi_\fx: (\mX,\mD(\fx);\mL(\fx))\to T$ is a $\bQ$-line bundle on $T$ defined as $\lambda_{\CM}(\fx):=-\lambda_{d+1}(\fx)$, which depends on $\fx\in Q(\bQ)$.
\end{definition}

The above definition only applies to rational $\fx\in Q(\bQ)$. It is generalized to arbitrary $\fx\in Q$ as follows.

\begin{proposition-definition}{\rm (\cite[Section 8.1]{LZ24b})}\label{def: R-CM}
Notation as in Definition \ref{def: Q-CM}, there exist finitely many $\bQ$-line bundles $M_q$'s on $T$ and finitely many $\bQ$-polynomials $g_q(x_1,...,x_k)$'s such that
$$\lambda_{\CM}(\fx)= \sum_q g_q(x_1,...,x_k) \cdot M_q $$
for any $\fx=Q(\bQ)$. Moreover, if $\pi: (\mX, \sum_{j=1}^k\mD_j)\to T$ admits a $G$-equivariant action for some algebraic group $G$, then $M_q$'s can be chosen $G$-equivariant. By the above formulation, one can naturally extend the definition of $\CM$-line bundle from $\fx\in Q(\bQ)$ to arbitrary $\fx\in Q$.
\end{proposition-definition}

We have the following well-known property of the CM line bundle.

\begin{proposition}{\rm (\cite{PT06})}\label{prop: Q-PT}
Notation as above. Suppose for any given $\fx\in Q(\bQ)$,  $\pi_\fx: (\mX,\mD(\fx);\mL(\fx))\to T$ is a special test configuration {\rm (}$T=\bA^1$ in this case{\rm )} of a log Fano pair $(X, \sum_{j=1}^kx_jD_j)$. Write $L(\fx):=-K_X-\sum_{j=1}^k x_jD_j$. Then $\lambda_{\CM}(\fx)$ is a $\bG_m$-linearized $\bQ$-line bundle on the base $\bA^1$ and we have
$$\Fut(\mX,\mD(\fx);\mL(\fx))=\frac{1}{(d+1)L(\fx)^d}\cdot \wt(\lambda_{\CM}(\fx)).$$
\end{proposition}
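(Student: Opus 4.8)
The plan is to compute the $\bG_m$-weight of $\lambda_{\CM}(\fx)$ at the origin via the Knudsen--Mumford expansion and to match it with the generalized Futaki invariant. First I would note that $\lambda_{\CM}(\fx)$ is canonically $\bG_m$-linearized: as $\pi_\fx$ is a test configuration, $\bG_m$ acts on $\mX$ over $\bA^1$, so $-K_{\mX/\bA^1}$ and hence $\mL(\fx)=-K_{\mX/\bA^1}-\mD(\fx)$ carry the canonical linearization, and the $\lambda_i(\fx)$ — built functorially from $\pi_{\fx*}\big(\mL(\fx)^{\otimes m}\big)$ — inherit $\bG_m$-linearizations, as does $\lambda_{\CM}(\fx)=-\lambda_{d+1}(\fx)$. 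For $m$ sufficiently divisible, Kawamata--Viehweg vanishing (anti-canonical polarization, klt log Fano fibres) together with cohomology and base change shows $\pi_{\fx*}\big(\mL(\fx)^{\otimes m}\big)$ is locally free of rank $d_m:=h^0\big(\mX_0,\mL(\fx)^{\otimes m}|_{\mX_0}\big)$, with formation commuting with base change, and that the $\bG_m$-weight of $\det\pi_{\fx*}\big(\mL(\fx)^{\otimes m}\big)$ at $0$ equals the total weight $w_m$ of the induced $\bG_m$-action on $H^0\big(\mX_0,\mL(\fx)^{\otimes m}|_{\mX_0}\big)$ with the canonical linearization.

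Taking $\bG_m$-weights at the origin in the Knudsen--Mumford identity gives
$$w_m=\sum_{i=0}^{d+1}\binom{m}{i}\,\wt\!\big(\lambda_i(\fx)\big),$$
a polynomial of degree $d+1$ in $m$ with top coefficient $\wt(\lambda_{d+1}(\fx))/(d+1)!=-\wt(\lambda_{\CM}(\fx))/(d+1)!$. On the other hand $\Fut(\mX,\mD(\fx);\mL(\fx))$ is read off from the joint asymptotics of $w_m$ and of $d_m=\frac{L(\fx)^d}{d!}m^d+\cdots$ (leading coefficient $L(\fx)^d/d!$ by flatness, the general fibre being $(X,\sum_j x_jD_j)$ polarized by $L(\fx)$). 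Since the test configuration is \emph{special}, i.e.\ $\mL(\fx)|_{\mX_0}=-K_{\mX_0}-\mD(\fx)_0$, I would pass to the $\bG_m$-equivariant compactification $(\overline{\mX},\overline{\mD(\fx)};\overline{\mL(\fx)})\to\bP^1$ with $\overline{\mL(\fx)}=-K_{\overline{\mX}/\bP^1}-\overline{\mD(\fx)}$, where equivariant Riemann--Roch identifies the leading coefficient of $w_m$ with a multiple of $\overline{\mL(\fx)}^{\,d+1}$, so that $\wt(\lambda_{\CM}(\fx))=\overline{\mL(\fx)}^{\,d+1}$, and combine this with the intersection-theoretic formula $\Fut(\mX,\mD(\fx);\mL(\fx))=\overline{\mL(\fx)}^{\,d+1}/\big((d+1)L(\fx)^d\big)$ for a special test configuration of a log Fano pair (Fujita; Li--Xu; Wang; Odaka) to obtain the claim. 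Throughout one works with a fixed sufficiently divisible multiple of the $\bQ$-line bundle $\mL(\fx)$ and rescales at the end.

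The main obstacle is the normalization bookkeeping: one must fix the $\bG_m$-linearization underlying both $w_m$ and $\lambda_{\CM}(\fx)$ (the answer is the canonical one, and only $\lambda_{d+1}$ — not the lower $\lambda_i$ — is insensitive to twisting it by a character, since such a twist changes $w_m$ only by $c\cdot d_m$ with $\deg d_m\le d$), decide precisely which combination of Knudsen--Mumford coefficients computes $\Fut$ in the present convention, and carry the signs through equivariant Riemann--Roch and the dictionary between $\bG_m$-weights at the two fixed points of $\bP^1$ and intersection numbers on $\overline{\mX}$. This normalization-matching is exactly the content of \cite{PT06}; granting it, the proposition follows by unwinding the definitions of $\lambda_{\CM}$ and of $\Fut$.
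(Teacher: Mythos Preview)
The paper does not prove this proposition; it is stated with a citation to \cite{PT06} and used as a black box, with the subsequent paragraph only remarking that the formula extends to irrational $\fx$ by continuity via Proposition-Definition \ref{def: R-CM}. Your plan is the standard argument behind the cited result: extract the top Knudsen--Mumford weight from the asymptotics of $w_m$, identify it with the intersection number $\overline{\mL(\fx)}^{\,d+1}$ on the compactification over $\bP^1$, and match with the intersection-theoretic formula for the generalized Futaki invariant of a special test configuration. This is correct and is precisely the content the paper is importing from \cite{PT06}; there is nothing to compare against in the paper itself.
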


Under the condition of Proposition \ref{prop: Q-PT}, we know that 
$\pi_\fx: (\mX,\mD(\fx);\mL(\fx))\to T$ is a special test configuration for arbitrary $\fx\in Q$. Note that 
$$\wt(\lambda_{\CM}(\fx))=\sum_q g_q(\fx)\cdot\wt(M_q)$$
for some $\bG_m$-linearized line bundles $M_q$'s and some polynomials $g_q(\fx)$'s (see Proposition-Definition \ref{def: R-CM}). We thus see that the formulation of the generalized Futaki invariant in Proposition \ref{prop: Q-PT} also applies to arbitrary $\fx\in Q$ by the continuity of both sides.

\subsection{Boundedness of algebraic varieties}

Let $\mP$ be a set of projective varieties of dimension $d$. We say $\mP$ is bounded if there exists a projective morphism $ \mY\to T$ between schemes of finite type such that for any $X\in \mP$, there exists a closed point $t\in T$ such that $X\cong \mY_t$. Let $\mP'$ be a set of projective couples of dimension $d$. We say 
$\mP'$ is log bounded if there exist a projective morphism $\mY\to T$ between schemes of finite type and a reduced Weil divisor $\mD$ on $\mY$ such that for any $(X, \Delta)\in \mP$, there exists a closed point $t\in T$ such that $(X, \red(\Delta))\cong (\mY_t, \mD_t)$. Here $\red(\Delta)$ means taking all the coefficients of irreducible components in $\Delta$ to be one.

\section{Constructible log Fano domians}

Let $A\subset \bR^k$ be a subset. Given a couple $(X, \sum_{j=1}^k D_j)\in \widehat{\mG}$, recall the following definition of log Fano domain (resp. weak log Fano domain) restricted to the set $A$:
$$\LF(X, \sum_{j=1}^kD_j)_A:=\{(x_1,...,x_k)\in A\ |\ \text{$(X, \sum_{j=1}^kx_jD_j)$ is a log Fano pair}\}; $$
$$({\rm resp.}\ \ \WLF(X, \sum_{j=1}^kD_j)_A:=\{(x_1,...,x_k)\in A\ |\ \text{$(X, \sum_{j=1}^kx_jD_j)$ is a weak log Fano pair}\})$$

For a couple $(X, \sum_{j=1}^k D_j)\in \widehat{\mG}$, it is not hard to see the closure of $\LF(X, \sum_{j=1}^kD_j)_P$  is a rational polytope. If we write $Q=\overline{\LF(X, \sum_{j=1}^kD_j)_P}$ and let $\{F_l\}_{l=1}^m$ be the set of all proper faces of $Q$, then we have the following formulation:
$$\LF(X, \sum_{j=1}^kD_j)_P= Q^\circ\ \cup\ \cup_{i=1}^s F^\circ_{r_i},  $$
where $\{r_1, r_2,..., r_s\}$ is a subset of $\{1,2,...,m\}$. The weak log Fano domain satisfies the same property.

\begin{proposition}\label{prop: first stra}
Suppose there exists a family of couples over a finite type scheme $\pi: (\mX, \sum_{j=1}^k\mD_j)\to T$ such that $\widehat{\mG}$ is contained in the set of fibers of $\pi$. Then there exists a constructible stratification $T=\amalg_i T_i$ satisfying the following conditions:
\begin{enumerate}
\item each $T_i$ is smooth;
\item the induced family (obtained via base-change) $\pi_i: (\mX^{(i)}, \sum_{j=1}^k \mD_j^{(i)})\to T_i$ is flat; 
\item the fibers of $(\mX^{(i)}, \sum_{j=1}^k \mD_j^{(i)})\to T_i$ admit the same log Fano domain (resp. weak log Fano domain) restricted to $P$ for any index $i$.
\end{enumerate}
\end{proposition}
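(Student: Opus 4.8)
The plan is to construct the stratification by combining three standard stratification procedures and then refining. First I would apply generic flatness (and its relative version over a Noetherian base, together with the fact that one may pass to a stratification) to the morphism $\pi: (\mX, \sum_j\mD_j)\to T$: there is a locally closed stratification $T=\amalg_\alpha T_\alpha$ over which both $\mX\to T$ and each $\mD_j\to T$ become flat, and after further stratifying we may assume each $T_\alpha$ is smooth (pass to the smooth locus of $T_\alpha^{\red}$, then to the complement, and induct on dimension). This handles conditions (1) and (2); the only real content is in (3).

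For condition (3), the key point is that the log Fano domain $\LF(X,\sum_jD_j)_P$ of a fiber is, by the discussion preceding the proposition, completely determined by the rational polytope $Q=\overline{\LF(X,\sum_jD_j)_P}$ together with the subset $\{r_1,\dots,r_s\}$ of proper faces of $Q$ that are contained in the domain; and likewise for the weak log Fano domain. So it suffices to show that, after a further finite constructible refinement of the stratification, the function sending $t\in T_\alpha$ to this combinatorial datum is constant on each stratum. I would do this by fixing a rational vector $\fx\in P(\bQ)$ and analyzing the two defining conditions for $(X_t,\sum_jx_jD_{j,t})$ to be log Fano: (a) $-K_{X_t}-\sum_jx_jD_{j,t}$ is ample, and (b) the pair is klt. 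Ampleness of a $\bQ$-Cartier divisor class is an open and in fact constructible condition in families (using boundedness of the relevant Hilbert-type data over a Noetherian base, or simply that the ample cone behaves constructibly; one can also invoke that the set where a given $\bQ$-divisor is $\bR$-Cartier and ample is constructible after stratifying so that Weil divisors become $\bQ$-Cartier of bounded index). Kltness is likewise a constructible condition by inversion-of-adjunction-type arguments, or more simply because by log boundedness of $\widehat\mG$ we may take a simultaneous log resolution over each stratum, and then the klt condition becomes a condition on discrepancies of a fixed finite set of divisors, hence constructible. Intersecting the finitely many loci obtained by letting $\fx$ range over the (finitely many) rational vertices and face-barycenters that can possibly occur — here one uses that $Q$ is cut out by finitely many hyperplanes with bounded denominators as $t$ varies, because the $D_{j,t}$ live in a bounded family and $-K-\sum x_jD_j$ being nef/ample/big is governed by finitely many numerical conditions — gives the desired refinement.

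More precisely, the clean way to organize this is: by log boundedness (and by passing to a stratification) the Picard ranks and the cones $\Nef$, $\overline{\mathrm{Eff}}$ relevant to the fibers are locally constant in the appropriate sense, so the region in $P$ where $-K_{X_t}-\sum_jx_jD_{j,t}$ is ample (resp. big and nef) is cut out by the same finite list of linear inequalities for all $t$ in a stratum; this already pins down $Q^\circ$ and which open faces $F^\circ$ meet the nef/ample locus. Separately, the klt locus in $P$ is determined by discrepancy inequalities read off from a fixed simultaneous log resolution over the stratum, again the same inequalities for all $t$. The log Fano (resp. weak log Fano) domain restricted to $P$ is the intersection of these, so it is the same subset of $P$ for all $t$ in the stratum. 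Since there are only finitely many strata and the stratification is constructible, we are done after the finitely many refinements.

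The main obstacle I anticipate is making rigorous the claim that the relevant positivity data (ampleness/nefness/bigness of $-K_{X_t}-\sum_jx_jD_{j,t}$ as a function of $t$) is genuinely constructible and not merely semicontinuous — the subtlety is that $-K_{X}-\sum_jx_j\mD_j$ need not be $\bQ$-Cartier over $T$ before stratifying, and that ampleness jumps can a priori be non-constructible without a boundedness input. This is exactly where the standing hypothesis that $\widehat\mG$ is log bounded is essential: it lets us reduce to finitely many numerical types, spread out a polarization and a log resolution over each stratum, and thereby replace the delicate positivity questions by finitely many closed/open conditions on intersection numbers and discrepancies, each of which is constructible by Noetherian induction. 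Granting that, the combinatorial bookkeeping (that $Q$ and its distinguished faces are locally constant) is routine.
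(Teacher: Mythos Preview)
Your outline is essentially correct and is a reasonable unpacking of what the paper's proof does by citation: the paper simply invokes \cite[Proposition 4.1]{LZ24b} to reduce to fibers that are (weak) log Fano for some coefficient in $P$, and then \cite[Proposition 3.5]{LZ24b} for the constructible stratification with constant (weak) log Fano domain. Your strategy --- generic flatness plus smooth stratification for (1)--(2), then simultaneous log resolution for the klt condition and a constructibility argument for ampleness/nefness for (3) --- is exactly the kind of argument those cited propositions encode, so there is no genuinely different route here.

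Two places where your sketch would need tightening before it stands on its own. First, the assertion that the Nef/ample region in $P$ is ``cut out by the same finite list of linear inequalities for all $t$ in a stratum'' is the real content, and it does not follow just from log boundedness of $\widehat{\mG}$ as a set of couples; one needs to first stratify so that the $\bR$-Cartier domain is constant (this is \cite[Proposition 3.4]{LZ24b}, invoked later in the paper) and then argue via bounded Cartier index and Noetherian induction that the relevant intersection numbers with a fixed finite set of curve classes determine ampleness uniformly --- your phrase ``bounded denominators'' is gesturing at this but is not a proof. Second, your claim that only finitely many polytopes $Q$ can occur (so that checking finitely many rational test points suffices) is true but requires the same bounded-index input; you should make that dependency explicit rather than folding it into ``combinatorial bookkeeping.'' Finally, note that the paper's preliminary reduction via \cite[Proposition 4.1]{LZ24b} (throwing away fibers that are never log Fano in $P$) is a convenience you do not perform; it is harmless to skip, since such fibers have empty log Fano domain and can simply be assigned their own strata.
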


\begin{proof}
By \cite[Proposition 4.1]{LZ24b}, up to replacing $T$ with another finite type scheme, we may assume that for any $t\in T$, the fiber $(\mX_t, \sum_{j=1}^k c_j\mD_{j,t})$ is log Fano or weak log Fano for some $(c_1,...,c_k)\in P$. We are done by applying \cite[Proposition 3.5]{LZ24b}.
\end{proof}

\begin{proposition}\label{prop: uni fano}
There exists a finite rational polytope chamber decomposition $P=\cup_s P_s$ such that for any face $F$ of any chamber $P_s$, we have the following:
for any $(X,\sum_{j=1}^kD_j)\in \widehat{\mG}$, if $(X, \sum_{j=1}^kc_jD_j)$ is a log Fano pair (resp. weak log Fano pair) for some vector $(c_1,...,c_k)\in F^\circ$, then $(X,\sum_{j=1}^kx_jD_j)$ is a log Fano pair (resp. weak log Fano pair) for any vector $(x_1,...,x_k)\in F^\circ$.
\end{proposition}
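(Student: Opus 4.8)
The plan is to reduce Proposition \ref{prop: uni fano} to the log boundedness of $\widehat{\mG}$ together with Proposition \ref{prop: first stra}. First I would invoke the log boundedness of $\widehat{\mG}$ to produce a family of couples $\pi\colon (\mX, \sum_{j=1}^k\mD_j)\to T$ over a finite type scheme such that every object of $\widehat{\mG}$ appears as a fiber; here one should be mildly careful that the defining family for log boundedness only sees the reduced boundary, so I would first perform a constructible stratification of $T$ so that on each stratum the actual divisors $\mD_j$ (with their correct multiplicities one, since the $D_j$ are Weil) vary in a flat family, and then absorb this into $T$. Applying Proposition \ref{prop: first stra} I obtain a finite constructible stratification $T=\amalg_i T_i$ with $T_i$ smooth, $\pi_i$ flat, and \emph{all fibers over a fixed $T_i$ sharing the same log Fano domain (resp. weak log Fano domain) restricted to $P$}. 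Since there are finitely many strata, the set of log Fano domains $\{\LF(X,\sum_j D_j)_P : (X,\sum_j D_j)\in\widehat{\mG}\}$ (and likewise for weak log Fano domains) is \emph{finite}.

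Next I would use the structural description of each such domain recalled just before Proposition \ref{prop: first stra}: for a fixed couple, $\LF(X,\sum_j D_j)_P = Q^\circ \cup \bigcup_{i=1}^s F_{r_i}^\circ$ where $Q=\overline{\LF(X,\sum_j D_j)_P}$ is a rational polytope and the $F_{r_i}$ are among its proper faces. In particular each log Fano domain is a finite union of relative interiors of rational polytopes, and the same holds for each weak log Fano domain. Since we now have only finitely many such domains in total (over all of $\widehat{\mG}$), I would collect \emph{all} the rational polytopes $Q$ and \emph{all} their faces arising this way — still a finite list of rational polytopes $\{R_1,\dots,R_N\}$ inside $P$. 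The key point is that membership of a vector $(x_1,\dots,x_k)\in F^\circ$ in a given domain $\LF(X,\sum_j D_j)_P$ depends, for each fixed couple, only on \emph{which} of the finitely many relative-open strata $R_n^\circ$ the vector lies in; so if I build a chamber decomposition fine enough that every $R_n^\circ$ is a union of relative interiors of faces of chambers, the conclusion follows.

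To produce such a decomposition I would take the hyperplane arrangement $\mathcal{H}$ in $\bR^k$ generated by the affine spans of all facets of all the $R_n$ (equivalently, all the linear functionals cutting out these finitely many rational polytopes), intersected with $P$. This arrangement, restricted to $P$, induces a finite rational polytope chamber decomposition $P=\cup_s P_s$: each closed chamber $P_s$ is an intersection of $P$ with finitely many rational half-spaces from $\mathcal{H}$, hence a rational polytope, and distinct chambers meet only in lower-dimensional faces, so their relative interiors are disjoint. By construction, for any face $F$ of any chamber $P_s$, the relative interior $F^\circ$ lies entirely on one side of (or inside) each hyperplane of $\mathcal{H}$; consequently, for each of the finitely many rational polytopes $R_n$ (which is cut out by hyperplanes from $\mathcal{H}$), the relative-open set $R_n^\circ$ either contains $F^\circ$ or is disjoint from $F^\circ$. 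Since every log Fano (resp. weak log Fano) domain is a union of such $R_n^\circ$'s, each such domain either contains $F^\circ$ entirely or is disjoint from it. Translating back: if $(X,\sum_j x_j D_j)$ is log Fano (resp. weak log Fano) for one $(x_1,\dots,x_k)\in F^\circ$, then $F^\circ \subset \LF(X,\sum_j D_j)_P$ (resp. $\subset \WLF(X,\sum_j D_j)_P$), so it is log Fano (resp. weak log Fano) for every vector in $F^\circ$, which is exactly the assertion.

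The main obstacle I expect is the bookkeeping at the very first step: ensuring that the log boundedness of $\widehat{\mG}$ (which a priori only controls the reduced total space and reduced boundary) can be upgraded, after a constructible stratification, to a genuine flat family of couples to which Proposition \ref{prop: first stra} applies with the \emph{same} $D_j$'s — and, more subtly, making sure that whether $(X,\sum_j x_j D_j)$ is (weak) log Fano is genuinely constant along each stratum, including the klt condition and the $\bR$-Cartier/bigness–nefness of $-K_X-\sum_j x_j D_j$; this is precisely what Proposition \ref{prop: first stra} (via \cite[Propositions 3.5, 4.1]{LZ24b}) is designed to supply, so the remaining work is to invoke it correctly rather than to reprove it. The rest — extracting finiteness of domains, describing each as a finite union of rational relative-open polytopes, and refining to a common hyperplane arrangement — is elementary convex geometry.
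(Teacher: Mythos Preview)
Your proposal is correct and follows essentially the same route as the paper: invoke log boundedness of $\widehat{\mG}$ and Proposition~\ref{prop: first stra} to conclude that only finitely many (weak) log Fano domains occur, observe that each is a finite union of relative interiors of rational polytopes, and then produce a common refinement. The paper's proof simply asserts that from this data ``one could easily produce a desired finite rational polytope chamber decomposition of $P$,'' whereas you spell out that step via a hyperplane arrangement; your extra care about upgrading log boundedness to an honest family of couples is exactly what is absorbed into the citation of \cite[Proposition 4.1]{LZ24b} inside the proof of Proposition~\ref{prop: first stra}.
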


\begin{proof}
We only deal with the log Fano case, as the weak log Fano case can be proved similarly. Note that $\overline{\LF(X, \sum_{j=1}^kD_j)_P}$ is a rational polytope for any $(X, \sum_{j=1}^kD_j)$. Put $Q:=\overline{\LF(X, \sum_{j=1}^kD_j)_P}$, and let $\{F_l\}_{l=1}^m$ be the set of all proper faces of $Q$, then we see the following formulation
$$\LF(X, \sum_{j=1}^kD_j)_P=Q^\circ\ \cup\ \cup_{i=1}^s F^\circ_{r_i},  $$
where $\{r_1, r_2,..., r_s\}$ is a subset of $\{1,2,...,m\}$. By the constructible property of log Fano domains as presented in Proposition \ref{prop: first stra}, one could easily produce a desired finite rational polytope chamber decomposition of $P$.
\end{proof}

\section{Finiteness of K-semistable domains}

Let $A\subset \bR^k$ be a subset. Given a couple $(X, \sum_{j=1}^k D_j)\in \widehat{\mG}$, recall the following definition of K-semistable domain restricted to the set $A$:
$$\Kss(X, \sum_{j=1}^kD_j)_A:=\{(x_1,...,x_k)\in A\ |\ \text{$(X, \sum_{j=1}^kx_jD_j)$ is a K-semistable log Fano pair}\}.$$

\begin{proposition}\label{prop: finite kss domain}
Under the assumption that the set $\widehat{\mG}$ is log bounded, the following set of K-semistable domains restricted to $P$ is finite:
$$\{\Kss(X, \sum_{j=1}^kD_j)_P\ | \ \text{$(X, \sum_{j=1}^kD_j)\in \widehat{\mG}$}\}.$$
\end{proposition}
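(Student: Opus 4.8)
The plan is to deduce finiteness of the set of K-semistable domains from the log boundedness of $\widehat{\mG}$ by packaging $\widehat{\mG}$ into finitely many families and then using a constructibility argument. First I would invoke log boundedness to fix a family of couples $\pi:(\mX,\sum_{j=1}^k\mD_j)\to T$ over a finite type scheme $T$ such that every member of $\widehat{\mG}$ occurs (up to the reduced structure on the boundary) as a fiber of $\pi$; since $T$ has finitely many irreducible components and we are only after a finiteness statement, we may as well assume $T$ is irreducible, or even argue component by component. Then I would apply Proposition~\ref{prop: first stra} to replace $T$ by a constructible stratification $T=\amalg_i T_i$ into finitely many smooth pieces over which the family becomes flat and the (weak) log Fano domain restricted to $P$ is constant along each stratum; because there are finitely many strata, it suffices to prove that for a single flat family $(\mX^{(i)},\sum_j\mD_j^{(i)})\to T_i$ with constant log Fano domain $Q_i=\overline{\LF(\cdot)_P}$, the K-semistable domain $\Kss(\mX^{(i)}_t,\sum_j\mD^{(i)}_{j,t})_P$ takes only finitely many values as $t$ ranges over $T_i$.

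**Reduction to a rational-polytope grid and constructibility in $t$.** Working over one stratum $T_i$ with constant log Fano polytope $Q_i$, I would next use Proposition~\ref{prop: uni fano} (or rather its proof) to subdivide $Q_i\cap P$ by a finite rational polytope chamber decomposition $P=\cup_s P_s$ so that on the relative interior of each face $F$ of each chamber $P_s$, being log Fano is an all-or-nothing condition for the fibers. The point of this is that the K-semistable domain, intersected with any such $F^\circ$, is cut out inside the (fixed) log Fano region by the $\bR$-divisorial stability inequalities $\beta_{\mX_t,\sum_j x_j\mD_{j,t}}(E)\ge 0$. Two facts then do the work: (a) for fixed $t$, the set of $\fx\in F^\circ$ for which $(\mX_t,\sum_j x_j\mD_{j,t})$ is K-semistable is well understood (it is, after \cite{LZ24a}, \cite{LZ24b}, a finite intersection of conditions governed by the CM line bundle / $\delta$-invariant, hence a nice subset of $F^\circ$ whose ``shape'' is determined by finitely many polynomial data via Proposition-Definition~\ref{def: R-CM}); (b) as $t$ varies in the finite-type scheme $T_i$, these defining data vary constructibly. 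Concretely, one stratifies $T_i$ further (finitely) so that the relevant CM polynomials $g_q(\fx)\cdot M_q$ and the optimal destabilizing valuations vary in a controlled family over each piece; over each piece the K-semistable domain is then literally the same subset of $P$. Combining over the finitely many strata of $T$, over the finitely many chambers $P_s$ and their faces, and over the finitely many further strata, we get only finitely many possible sets $\Kss(X,\sum_j D_j)_P$.

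**The main obstacle.** The genuine difficulty is step~(b): showing that K-semistability of the fibers is a constructible-in-$t$ condition in a way that is also uniform in the coefficient vector $\fx$. For a single fixed rational $\fx$ this is classical openness of K-semistability in families (Blum–Liu–Xu, and \cite{LXZ22}); the subtlety here is the simultaneous dependence on $t\in T_i$ and on the real vector $\fx\in P$, together with the fact that the boundary $\sum_j x_j\mD_{j,t}$ may not be $\bQ$-Cartier and one is forced to work with $\bR$-coefficients and small modifications — this is exactly why $\mG$ was arranged to be complete under dominant small modifications and why $\widehat{\mG}$ was introduced. I expect the cleanest route is to fix, over each stratum, a log resolution of the total family (possible after shrinking/stratifying), express $A$ and $S$ invariants of divisors on the resolution as piecewise-polynomial functions of $(\fx, t)$, and then invoke the valuative criterion together with the finiteness of the relevant set of candidate destabilizing valuations (uniform boundedness of $\delta$-minimizers along a bounded family, from \cite{LXZ22}) to conclude that the locus of K-semistable $(\fx,t)$ is a finite union of locally closed pieces, each of product form $A_s\times T_{i,s}$ after a further finite stratification. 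Granting that, the proposition follows by pure bookkeeping: finitely many $T$-strata $\times$ finitely many coefficient chambers $\times$ finitely many sub-strata yields a finite list of K-semistable domains.
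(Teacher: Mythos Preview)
Your approach is essentially the same as the paper's: package $\widehat{\mG}$ into a family over a finite type base, then invoke Proposition~\ref{prop: first stra} to stratify so that the log Fano domain is constant along each stratum. The only difference is that the paper's proof stops there and cites \cite[Theorem 5.4]{LZ24b} to conclude finiteness of K-semistable domains on each stratum, whereas you identify this step as the ``main obstacle'' and sketch a direct proof via log resolutions, piecewise-polynomial $A$- and $S$-functions, and uniform finiteness of destabilizing divisors. Your sketch is on the right track and is roughly how \cite[Theorem 5.4]{LZ24b} goes, but it is not needed here: once you have the stratification of Proposition~\ref{prop: first stra}, that theorem applies verbatim and the proposition is immediate.
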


\begin{proof}
Given the log boundedness of $\widehat{\mG}$, there exists a family $\pi: (\mX, \sum_{j=1}^k\mD_j)\to T$ as in Proposition \ref{prop: first stra} and there is a constructible stratification of $T$ such that all the conditions listed in Proposition \ref{prop: first stra} are satisfied. The finiteness property of K-semistable domains then follows from \cite[Theorem 5.4]{LZ24b}.
\end{proof}

\section{Hilbert schemes}

Given a family of log Fano (resp. weak log Fano) $\bQ$-($\bR$-)pairs $(Y, \Delta)\to B$ over a smooth base $B$,  we say it is a $\bQ$-($\bR$-) Gorenstein family if $K_Y+\Delta$ is $\bQ$-($\bR$-)Cartier. In this section,  we aim to find some families with certain uniform $\bR$-Gorenstein property parametrizing the objects in $\widehat{\mG}$. For any subset $A\subset P$, we define the following subset of $\widehat{\mG}$ with respect to $A$:
$$\widehat{\mG}_A:=\{(X, \sum_{j=1}^kD_j)\in \widehat{\mG}\ |\ \text{$(X, \sum_{j=1}^kx_jD_j)$ is log Fano for any $(x_1,...,x_k)\in A$}\}. $$

\begin{proposition}\label{prop: second stra}
Under the assumption that the set $\widehat{\mG}$ is log bounded, there exist a finite rational polytope chamber decomposition $P:=\cup_s Q_s$ and finite families of couples $f_i: (\mY^{(i)}, \sum_{j=1}^k\mB_j^{(i)})\to S_i$ (indexed by $i$) satisfying the following conditions:
\begin{enumerate}
\item $\mY^{(i)}\to S_i$ is proper and flat over a finite type scheme $S_i$ for every $i$;
\item each $\mB^{(i)}_j$ is an effective Weil divisor on $\mY^{(i)}$ with no fiber contained in its support;
\item $S_i$ admits a product of $\PGL$-group action for every $i$;
\item the set $\widehat{\mG}$ is contained in the set of the fibers of all $f_i$'s;
\item for any face $F$ of any chamber $Q_s$, if the set $\widehat{\mG}_{F^\circ}$ is non-empty, then there exists an index $i$ such that the family $f_i: (\mY^{(i)}, \sum_{j=1}^k\mB^{(i)}_j)\to S_i$ satisfies that $K_{\mY^{(i)}/S_i}+\sum_{j=1}^kx_j\mB_j^{(i)}$ is $\bR$-Cartier and $(\mY^{(i)}_t, \sum_{j=1}^kx_j\mB^{(i)}_{j,t})$ is log Fano for any $(x_1,...,x_k)\in F^\circ$ and any $t\in S_i$; moreover, the set $\widehat{\mG}_{F^\circ}$ is contained in the set of fibers of $f_i$;
\item for any face $F$ of any chamber $Q_s$ and any family of couples $(\mY, \sum_{j=1}^k\mB_j)\to S$ over an irreducible smooth base $S$, suppose all of its fibers are contained in $\widehat{\mG}$ and $(\mY,\sum_{j=1}^kx_j\mB_j)\to S$ is an $\bR$-Gorenstein family of log Fano pairs for any $(x_1,...,x_k)\in F^\circ$, then there exists an index $i$ and a morphism $S\to S_i$ such that the family $(\mY, \sum_{j=1}^kx_j\mB_j)\to S$ is obtained by the base-change of $(\mY^{(i)}, \sum_{j=1}^kx_j\mB^{(i)}_j)\to S_i$ along $S\to S_i$ for any $(x_1,...,x_k)\in F^\circ$.
\end{enumerate}
\end{proposition}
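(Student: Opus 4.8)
The plan is to construct these families by combining the log-Fano stratification results already established (Propositions \ref{prop: first stra} and \ref{prop: uni fano}) with a Hilbert scheme / relative embedding construction that yields a "universal" family over a scheme with a $\PGL$-action. Here is the outline.

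\textbf{Step 1: Reduce to a fixed polytope chamber decomposition.} First I would invoke Proposition \ref{prop: uni fano} to obtain a finite rational polytope chamber decomposition $P = \cup_s Q_s$ with the property that for any face $F$ of any chamber $Q_s$, the log Fano locus of every couple in $\widehat{\mG}$ is "constant on $F^\circ$" in the sense that membership in $\LF(X,\sum D_j)_P$ of one point of $F^\circ$ forces all of $F^\circ$ to lie in it. This is exactly the chamber decomposition $P = \cup_s Q_s$ in the statement. After this reduction, fix one face $F$ with $\widehat{\mG}_{F^\circ} \ne \emptyset$ and pick a rational point $\fx_0 = (x_1^0,\dots,x_k^0) \in F^\circ$; then $\widehat{\mG}_{F^\circ}$ equals the set of couples $(X,\sum D_j) \in \widehat{\mG}$ for which $(X,\sum x_j^0 D_j)$ is log Fano, and for these couples $(X,\sum x_j D_j)$ is log Fano throughout $F^\circ$.

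\textbf{Step 2: Build the parametrizing family over a scheme with $\PGL$-action.} Since $\widehat{\mG}$ is log bounded (hypothesis) and we have fixed a face $F$, the subset $\widehat{\mG}_{F^\circ}$ is also log bounded. For the rational point $\fx_0$, the anticanonical-type polarization $-K_X - \sum x_j^0 D_j$ is an ample $\bQ$-divisor, bounded in the log bounded family; so after passing to a fixed sufficiently divisible multiple $N$, the divisor $-N(K_X + \sum x_j^0 D_j)$ is very ample and gives a uniformly bounded embedding $X \hookrightarrow \bP^{M}$ for some fixed $M$. The closure of the locus of such embedded pairs, together with the embedded divisors $D_j$, lies in a product of Hilbert schemes $\Hilb(\bP^M) \times \prod_j \Hilb(\bP^M)$; taking the appropriate locally closed subscheme $S_i$ and restricting the universal families gives $f_i : (\mY^{(i)}, \sum_j \mB_j^{(i)}) \to S_i$ with $\mY^{(i)} \to S_i$ proper and flat (after stratifying to achieve flatness, via Proposition \ref{prop: first stra}), the $\mB_j^{(i)}$ effective Weil divisors with no fiber in their support, and a natural $\PGL_{M+1}$-action on $S_i$ coming from the change of embedding — this is the required product of $\PGL$-group action (one $\PGL$ factor for each distinct multiple $N$ / embedding one uses across the finitely many faces, hence "a product"). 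By construction $\widehat{\mG}_{F^\circ}$ — indeed all of $\widehat{\mG}$, if one also embeds using polarizations adapted to the weak-log-Fano locus — is covered by the fibers, giving (1)--(4) and the first half of (5).

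\textbf{Step 3: Uniform $\bR$-Gorenstein property and the universal base-change property.} For (5): on the log bounded family, the set of possible $\bQ$-divisor classes of $K_X + \sum x_j^0 D_j$ is finite (boundedness of the relevant Weil divisor classes), so there is a single $r$ with $r(K_X + \sum x_j^0 D_j)$ Cartier for every member; combined with the log-Fano constancy on $F^\circ$ from Step 1, and using that $-K_{\mY^{(i)}/S_i} - \sum x_j^0 \mB_j^{(i)}$ is the relative polarization (hence $\bR$-Cartier) while the individual $K_{\mY^{(i)}/S_i} + \sum x_j \mB_j^{(i)}$ for $\fx \in F^\circ$ differ from it by a combination with fixed $\bQ$-Cartier coefficients, we get that $K_{\mY^{(i)}/S_i} + \sum x_j \mB_j^{(i)}$ is $\bR$-Cartier and the fibers are log Fano for all $\fx \in F^\circ$, after discarding fibers where this fails (a closed condition, so we refine $S_i$). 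For (6): given a family $(\mY,\sum \mB_j) \to S$ over irreducible smooth $S$ with all fibers in $\widehat{\mG}$ and $(\mY, \sum x_j \mB_j) \to S$ an $\bR$-Gorenstein family of log Fano pairs over $F^\circ$, the relative divisor $-K_{\mY/S} - \sum x_j^0 \mB_j$ is relatively ample and $\bR$-Cartier; taking the same multiple $N$ (enlarging $N$ across the finitely many $S_i$ if needed) makes $\mY \hookrightarrow \bP(f_* \mO(-N(K_{\mY/S}+\sum x_j^0\mB_j)))$ a closed embedding into a projective bundle which, after an étale-local trivialization or by the universal property of Hilbert schemes, induces the classifying morphism $S \to S_i$ realizing $(\mY,\sum x_j \mB_j) \to S$ as the pullback of $(\mY^{(i)}, \sum x_j \mB_j^{(i)}) \to S_i$ for every $\fx \in F^\circ$ (the $\bR$-Cartier structures being compatible because they are all determined by the single very ample polarization at $\fx_0$ together with the fixed $\bQ$-Cartier corrections).

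\textbf{Main obstacle.} I expect the delicate point to be (6): ensuring that a \emph{single} Hilbert-scheme family $(\mY^{(i)}, \sum x_j \mB_j^{(i)}) \to S_i$ simultaneously pulls back, for \emph{all} $\fx \in F^\circ$ at once, to a given family — one must make the embedding (hence the classifying map $S \to S_i$) depend only on $\fx_0$, and then argue that the $\bR$-Gorenstein structures for the other $\fx$ are automatically compatible because they are pinned down by boundedness to lie in a fixed finite-dimensional family of $\bQ$-Cartier relations. Subtleties about $K_{\mY/S}$ being well-defined only up to linear equivalence (choice of canonical divisor in the family) and about passing between $S$ irreducible smooth and the possibly singular/reducible $S_i$ (so the map $S \to S_i$ need not be flat) also need care, as does organizing the finitely many multiples $N$ over the finitely many faces into a single "product of $\PGL$" so the group statement in (3) holds uniformly.
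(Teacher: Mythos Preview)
Your overall architecture matches the paper's: fix the chamber decomposition via Proposition~\ref{prop: uni fano}, then for each face $F$ pass to a Hilbert-scheme parametrization using a very ample multiple of the anticanonical polarization at a rational point of $F^\circ$, and verify the universal property (6) via the classifying map. The outline is sound and the identification of (6) as the delicate point is correct.

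There is, however, a concrete gap in Step~3 for condition~(5). You claim that once $-K_{\mY^{(i)}/S_i}-\sum x_j^0\mB_j^{(i)}$ is $\bR$-Cartier (being the relative polarization), the other $K_{\mY^{(i)}/S_i}+\sum x_j\mB_j^{(i)}$ for $\fx\in F^\circ$ ``differ from it by a combination with fixed $\bQ$-Cartier coefficients.'' But the difference is $\sum(x_j-x_j^0)\mB_j^{(i)}$, and the individual $\mB_j^{(i)}$ are only Weil divisors---there is no reason any nontrivial combination of them should be $\bQ$-Cartier on the Hilbert-scheme base you have built so far. Knowing Cartier at a \emph{single} $\fx_0$ tells you nothing about other $\fx\in F^\circ$ unless the face is zero-dimensional. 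The paper repairs this by imposing Cartier conditions at \emph{every vertex} $\fa_r$ of $F$: for each vertex one takes the reflexive sheaf $\omega^{[-n_r]}(-n_r\sum a_{rj}\fD_j)$ and passes to the locally closed subscheme where it is invertible (Koll\'ar's representability result \cite[Corollary~3.30]{Kollar23}). Once these finitely many sheaves are all line bundles, every $\fx\in F^\circ$ is a convex combination of the vertices and $\bR$-Cartier follows. This also makes (6) automatic: a given family with the $\bR$-Gorenstein property over $F^\circ$ will in particular have the vertex sheaves invertible, so its classifying map factors through the locally closed locus.

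A secondary point: the paper parametrizes the divisors $D_j$ not by a product of ordinary Hilbert schemes but by Koll\'ar's moduli of K-flat divisors \cite[Theorem~7.3]{Kollar23}, which is better adapted to Weil divisors in families over non-reduced or singular bases; and the ``product of $\PGL$'' in (3) arises from the embeddings $D_1\times\cdots\times D_k\hookrightarrow\bP^{N_1}\times\cdots\times\bP^{N_k}$ (one $\PGL$ per divisor), not from different faces $F$.
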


\begin{proof}
We divide the proof into several steps.

\

Step 1. \textit{In this step, we find the finite rational polytope chamber decomposition.}

Since $\widehat{\mG}$ is assumed to be log bounded, there exists a family $(\mX, \sum_{j=1}^{k} \mD_j)\to T$ as in Proposition \ref{prop: first stra}. By the arguments in the proof of Propostion \ref{prop: first stra} and Proposition \ref{prop: uni fano},  there exists a finite rational polytope chamber decomposition $P=\cup_s P_s$ such that for any fiber $(\mX_t, \sum_{j=1}^k\mD_{j,t})$ and any face $F$ of any chamber $P_s$, if $(\mX_t, \sum_{j=1}^kc_j\mD_{j,t})$ is a log Fano pair for some vector $(c_1,...,c_k)\in F^\circ$, then $(\mX_t, \sum_{j=1}^kx_j\mD_{j,t})$ is a log Fano pair for any vector $(x_1,...,x_k)\in F^\circ$. 

\

Step 2. \textit{In this step, we find some families of couples with respect to a face $F$.} 

In this step, we fix a face $F$ of some chamber $P_s$, and also fix a rational vector $(a_1,...,a_k)\in F^\circ(\bQ)$. For any $t\in T$, we define the following $\bR$-Cartier domain
$$\RC(\mX_t, \sum_{j=1}^k\mD_{j,t}):=\{(x_1,...,x_k)\in P\ |\  \text{$(\mX_t, \sum_{j=1}^kx_j\mD_{j,t})$ is $\bR$-Cartier}\}. $$
By \cite[Proposition 3.4]{LZ24b}, there exists a constructible stratification $T=\amalg_i T_i$ such that the fibers of each induced family 
$$\pi_i: (\mX^{(i)}, \sum_{j=1}^k\mD^{(i)}_j)\to T_i$$ 
admit the same $\bR$-Cartier domain. Up to removing redundant families, we assume the $\bR$-Cartier domain associated to each $\pi_i$ contains $F^\circ$. Up to further stratifying each $T_i$ in a constructible way, we may assume each $T_i$ is smooth and $K_{\mX^{(i)}}+\sum_{j=1}^ka_j\mD^{(i)}_j$ is $\bQ$-Cartier. Thus the following set is an open subset of $T_i$:
$$U_i:=\{t\in T_i\ |\ \text{$(\mX^{(i)}_t, \sum_{j=1}^ka_j\mD^{(i)}_{j,t})$ is log Fano}\}. $$
By step 1, we see for the induced family 
$$\pi_{i, U}: (\mX^{(i)}_{U_i}, \sum_{j=1}^k\mD^{(i)}_{j, U_i})\to U_i,$$ 
$(\mX^{(i)}_t, \sum_{j=1}^kx_j\mD^{(i)}_{j,t})$ is log Fano for any $t\in U_i$ and any $(x_1,...,x_k)\in F^\circ$. Replace $T_i$ (resp. $\pi_i$) with $U_i$ (resp. $\pi_{i, U}$) and consider the following subset of $\widehat{\mG}$:
$$\widehat{\mG}_{F^\circ} = \{(X, \sum_{j=1}^kD_j)\in \widehat{\mG}\ |\ \text{$(X, \sum_{j=1}^kx_jD_j)$ is log Fano for any $(x_1,...,x_k)\in F^\circ$}\}.$$
It is clear that $\widehat{\mG}_{F^\circ}$ is contained in the set of fibers of $\pi_i$'s. Up to a further constructible stratification of each $T_i$, we may assume $(\mX^{(i)}, \sum_{j=1}^kx_j\mD^{(i)}_j)\to T_i$ is an $\bR$-Gorenstein family of log Fano pairs for any $i$ and any $(x_1,...,x_k)\in F^\circ$.

\

Step 3. \textit{In this step, we pack $T_i$'s by Hilbert schemes.} 

In step 2, for a fixed face $F$ of some chamber $P_s$, we construct finite families of couples 
$$\pi_i: (\mX^{(i)}, \sum_{j=1}^k\mD^{(i)}_j)\to T_i$$ 
such that $(\mX^{(i)}, \sum_{j=1}^kx_j\mD^{(i)}_j)\to T_i$ is an $\bR$-Gorenstein family of log Fano pairs for any $i$ and any $(x_1,...,x_k)\in F^\circ$, and the set $\widehat{\mG}_{F^\circ}$ is contained in the set of fibers of $\pi_i$'s. 
In this step, we just focus on one family $\pi_i: (\mX^{(i)}, \sum_{j=1}^{k} \mD^{(i)}_j)\to T_i$. By the above uniform $\bR$-Gorenstein property of $\pi_i$, we know that the Hilbert function 
$$m\  \text{(sufficiently divisible)}\mapsto \chi\left(-m(K_{\mX_t^{(i)}}+\sum_{j=1}^k x_j\mD^{(i)}_{j, t})\right)$$
does not depend on $t\in T_i$ for any fixed rational vector $(x_1,...,x_k)\in  F^\circ(\bQ)$.

Fix a rational vector $(c_1,...,c_k)\in F^\circ(\bQ)$ and choose a sufficiently divisible positive integer $N$ such that $-N(K_{\mX^{(i)}/T_i}+\sum_{j=1}^{k} c_j\mD^{(i)}_j)$ is very ample over $T_i$. Write
$$H(m):= \chi\left(-mN(K_{\mX_t^{(i)}}+\sum_{j=1}^k c_j\mD^{(i)}_{j, t})\right)\ \  \text{and}\ \ M:=H(1)-1.$$
Let $\Hilb_{H}(\bP^{M})$ be the Hilbert scheme parametrizing closed subschemes of $\bP^{M}$ with Hilbert polynomial $H(m)$. Let $V_i\subset \Hilb_{H}(\bP^{M})$ be the open subscheme parametrizing normal and Cohen-Macaulay varieties. We clearly see that $V_i$ admits a $\PGL(M+1)$-action. 

Let $\mG_i$ be the set of fibers of $(\mX^{(i)}, \sum_{j=1}^{k} \mD^{(i)}_j)\to T_i$ and consider the following set of vectors of degrees:
$$\deg_i:=\bigg\{\left(-N(K_X+\sum_{j=1}^k c_jD_j)\cdot D_1,...,-N(K_X+\sum_{j=1}^k c_jD_j)\cdot D_k\right) | \textit{$(X, \sum_{j=1}^k D_j)\in \mG_i$}\bigg\}. $$
It is clear that $\deg_i$ is a finite set. For each vector of degrees $(d_1,...,d_k)\in \deg_i$, there exists a vector of positive integers $(N_1,...,N_k)$ such that for any $(X, \sum_{j=1}^kD_k)$ with degree vector $(d_1,...,d_k)$,  we have an embedding 
$$D_1\times ...\times D_k\hookrightarrow \bP^{N_1}\times...\times \bP^{N_k}.$$ 
By \cite[Theorem 7.3]{Kollar23}, there exists a separated $V_i$-scheme $W_{i1}$ of finite type parametrizing K-flat divisors $(D_1,...,D_k)$ with all possible vectors of degrees in $\deg_i$. Write 
$$(\fX^{(i)}, \sum_{j=1}^k \fD^{(i)}_j)\to W_{i1}$$ 
for the corresponding universal family. Note that $W_{i1}$ is a finite disjoint union of components corresponding to the finite elements in $\deg_i$.
It is clear that the set $\mG_i$ is contained in the set of fibers of $(\fX^{(i)}, \sum_{j=1}^k \fD^{(i)}_j)\to W_{i1}$, and each component of $W_{i1}$ admits a product of $\PGL$-group action.

Let $\{\fa_r:=(a_{r1},...,a_{rk})\}_r$ be the collection of vertices of $F$. For each $r$, we choose a sufficiently divisible positive integer $n_r$ such that $-n_r(K_X+\sum_{j=1}^k a_{rj}D_j)$ is a line bundle for any $(X, \sum_{j=1}^k D_j)\in \mG_i$. Denote 
$$L_r:=\omega_{\fX^{(i)}/W_{i1}}^{[-n_r]}(-n_r\cdot \sum_{j=1}^k a_{rj}\fD^{(i)}_j).$$
By \cite[Corollary 3.30]{Kollar23}, there exists a locally closed subscheme $W_{2i}\subset W_{1i}$ such that a map $W\to W_{1i}$ factors through $W_{2i}$ if and only if 
$$(L_r)_W:= \omega_{\fX^{(i)}_W/W}^{[-n_r]}(-n_r\cdot \sum_{j=1}^k a_{rj}(\fD^{(i)}_j)_W)$$
is invertible for any index $r$, where $(\fX^{(i)}_W, \sum_{j=1}^k (\fD^{(i)}_j)_W)\to W$
is obtained by the base change along $W\to W_{1i}$. We clearly see that the set $\mG_i$ is contained in the set of fibers of the following family
$$(\fX^{(i)}_{W_{i2}}, \sum_{j=1}^k (\fD^{(i)}_j)_{W_{i2}})\to W_{i2}, $$
and each component of $W_{i2}$ (induced by that of $W_{i1}$) naturally admits a product of $\PGL$-group action.

\

Step 4. \textit{In this step, we construct the desired families and complete the proof.} 

By step 3, for each family $(\mX^{(i)}, \sum_{j=1}^{k} \mD^{(i)}_j)\to T_i$ (constructed in step 2 associated to $F$), we construct a universal family
$(\fX^{(i)}_{W_{i2}}, \sum_{j=1}^k (\fD^{(i)}_j)_{W_{i2}})\to W_{i2} $
by the Hilbert scheme argument. By the construction,  
$$\omega_{\fX_{W_{i2}}^{(i)}/W_{i2}}^{[-n_r]}(-n_r\cdot \sum_{j=1}^k a_{rj}(\fD^{(i)}_j)_{W_{i2}})$$ 
is invertible for all $r$, where $(a_{r1},...,a_{rk})$'s are vertices of $F$. Consider the following set
$$W_i:=\{t\in W_{i2}\ |\ \text{$(\fX^{(i)}_t, \sum_{j=1}^k b_j\fD^{(i)}_{j,t})$ is log Fano for any $(b_1,...,b_k)\in F^\circ$}\}. $$
By \cite[Proof of Proposition 3.5]{LZ24b}, we see that $W_i$ is a subscheme of $W_{i2}$ and each component of $W_i$ (induced by that of $W_{i1}$) still admits a product of $\PGL$-group action. By a base-change, we obtain a family of couples
$$\phi_i: (\fX^{(i)}_{W_i}, \sum_{j=1}^k (\fD^{(i)}_j)_{W_i})\to W_i. $$ 
It is clear that the set $\widehat{\mG}_{F^\circ}$ is contained in the set of fibers of $\phi_i$'s. 

Given a family of couples $(\mY, \sum_{j=1}^k\mB_j)\to S$ over an irreducible smooth base $S$ such that all of its fibers are contained in $\widehat{\mG}$ and $(\mY,\sum_{j=1}^kx_j\mB_j)\to S$ is an $\bR$-Gorenstein family of log Fano pairs for any $(x_1,...,x_k)\in F^\circ$. We see that the following Hilbert function $(\spadesuit)$
$$m\  \text{(sufficiently divisible)}\mapsto \chi\left(-m(K_{\mY_t}+\sum_{j=1}^k x_j\mB_{j, t})\right)$$
does not depend on $t\in S$ for any fixed rational vector $(x_1,...,x_k)\in  F^\circ$. Note that there also exists an index $i$ such that the family $\pi_i: (\mX^{(i)}, \sum_{j=1}^k\mD^{(i)}_j)\to T_i$ admits the same Hilbert function as $(\spadesuit)$. By the Hilbert scheme construction, there exists a morphism $S\to W_i$ such that the family $(\mY, \sum_{j=1}^k\mB_j)\to S$ is the pull-back of $\phi_i$ along $S\to W_i$.

Above all, for each $F$, we construct finite families $\{\phi_i\}$ associated to it. Wee see that the collection of these families satisfy all the conditions desired. The proof is complete.
\end{proof}

\begin{proposition}\label{prop: uni gorenstein}
Under the assumption that $\widehat{\mG}$ is log bounded, there exists a finite rational polytope chamber decomposition $P=\cup_s P_s$ satisfying the following property: for any face $F$ of any chamber $P_s$, if $(\mY, \sum_{j=1}^kc_j\mB_j)\to S$ is an $\bR$-Gorenstein family of K-semistable log Fano pairs over a smooth base $S$ for some $(c_1,...,c_k)\in F^\circ$ with $(\mY_t, \sum_{j=1}^kB_{j,t})\in \widehat{\mG}$ for any $t\in S$, then $(\mY, \sum_{j=1}^kx_j\mB_j)\to S$ is an $\bR$-Gorenstein family of log Fano pairs for any $(x_1,...,x_k)\in F^\circ$.
\end{proposition}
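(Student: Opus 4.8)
The plan is to use the finite family package from Proposition \ref{prop: second stra} to reduce the statement to an openness/constructibility question about a single family, then combine this with the finite chamber decomposition of Proposition \ref{prop: uni fano} for the log Fano condition. First I would fix the chamber decomposition $P=\cup_s P_s$ to be a common refinement of the one produced in Proposition \ref{prop: second stra} and the one in Proposition \ref{prop: uni fano}; such a common refinement is again a finite rational polytope chamber decomposition, and passing to it only strengthens the conclusions of both propositions. Now fix a face $F$ of a chamber $P_s$ and suppose we are given an $\bR$-Gorenstein family $(\mY,\sum_j c_j\mB_j)\to S$ of K-semistable log Fano pairs for some $(c_1,\dots,c_k)\in F^\circ$, with all couple-fibers in $\widehat{\mG}$. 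Since K-semistable log Fano pairs are in particular log Fano, and the fibers of the family lie in $\widehat{\mG}$, by Proposition \ref{prop: uni fano} (applied at the single coefficient vector $(c_1,\dots,c_k)\in F^\circ$ where the fibers are log Fano) every fiber $(\mY_t,\sum_j x_j\mB_{j,t})$ is a log Fano pair for \emph{every} $(x_1,\dots,x_k)\in F^\circ$ and every $t\in S$. So the couple-fibers of the family all lie in $\widehat{\mG}_{F^\circ}$.

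Next I would invoke condition (6) of Proposition \ref{prop: second stra}: there is an index $i$ and a morphism $S\to S_i$ such that $(\mY,\sum_j x_j\mB_j)\to S$ is the base change of $f_i\colon (\mY^{(i)},\sum_j x_j\mB^{(i)}_j)\to S_i$ along $S\to S_i$, for every $(x_1,\dots,x_k)\in F^\circ$. Strictly speaking, for this we need $S$ to map into $S_i$ for \emph{this particular} $i$; since the hypothesis of (6) requires that $(\mY,\sum_j x_j\mB_j)\to S$ be an $\bR$-Gorenstein family of log Fano pairs for all $(x_1,\dots,x_k)\in F^\circ$, and we have just shown that the fibers are log Fano for all such coefficients, the only thing missing is the $\bR$-Gorenstein (i.e. $\bR$-Cartier) property for all $(x_1,\dots,x_k)\in F^\circ$. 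But this is exactly the content of the present proposition, so I cannot assume it at the outset. The resolution is to apply the weaker conclusion (5) of Proposition \ref{prop: second stra} instead: $\widehat{\mG}_{F^\circ}$ (when nonempty, which it is by our hypothesis) is contained in the set of fibers of some $f_i$ for which $K_{\mY^{(i)}/S_i}+\sum_j x_j\mB^{(i)}_j$ is already $\bR$-Cartier, and $(\mY^{(i)}_t,\sum_j x_j\mB^{(i)}_{j,t})$ is log Fano, for \emph{all} $(x_1,\dots,x_k)\in F^\circ$ and all $t\in S_i$. Thus on the level of the ambient parameter space $S_i$ the desired property holds uniformly.

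It then remains to transport this from $S_i$ down to our given $S$. For this, I would argue as follows: having established that the couple-fibers of $(\mY,\sum_j\mB_j)\to S$ all lie in $\widehat{\mG}_{F^\circ}$ and that over $F^\circ$ the pair is fiberwise log Fano, one can apply condition (6) after all — the hypothesis of (6) is that the given family be an $\bR$-Gorenstein family of log Fano pairs over $F^\circ$, and we can first \emph{verify} this hypothesis by a limiting/bootstrap argument: the given family is $\bR$-Gorenstein at $(c_1,\dots,c_k)\in F^\circ$ by assumption, and $\bR$-Cartierness of $K_{\mY/S}+\sum_j x_j\mB_j$ is a condition cut out on the coefficient vector by the relations among the finitely many Weil divisor classes $\mB_{j}$ in a suitable neighbourhood, which on an irreducible smooth base $S$ is stable under the whole affine subspace spanned — in particular it holds on all of $F^\circ$ since $F^\circ$ is contained in the relative interior of a face on which, by Step 2 of the proof of Proposition \ref{prop: second stra}, the $\bR$-Cartier domain of a standard model already contains $F^\circ$. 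Once the hypothesis of (6) is verified, (6) gives the base change $S\to S_i$ with $i$ as in (5) (these can be taken to be the same family package, using the common refinement), and pulling back the $\bR$-Cartier divisor $K_{\mY^{(i)}/S_i}+\sum_j x_j\mB^{(i)}_j$ along $S\to S_i$ shows $K_{\mY/S}+\sum_j x_j\mB_j$ is $\bR$-Cartier for all $(x_1,\dots,x_k)\in F^\circ$; combined with the fiberwise log Fano property this is exactly the assertion.

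\textbf{Main obstacle.} The delicate point, as signalled above, is the apparent circularity: condition (6) of Proposition \ref{prop: second stra} has as a hypothesis precisely the $\bR$-Gorenstein-over-$F^\circ$ property we are trying to prove. The real work is therefore to verify that hypothesis independently — i.e. to show that if an $\bR$-Gorenstein family over an irreducible smooth base $S$ with fibers in $\widehat{\mG}$ is $\bR$-Cartier at one point of $F^\circ$, then it is $\bR$-Cartier throughout $F^\circ$. This is where the assumption that $\mG$ (hence $\widehat{\mG}$) is complete under dominant small modifications enters: it guarantees that the relevant small $\bQ$-factorializations are themselves in the family package, so that $\bR$-Cartierness can be tested via a fixed finite set of divisorial relations whose validity propagates along the face. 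I expect the bulk of the argument to consist of making this propagation precise, most likely by reducing to the $\bR$-Cartier domain constructibility result \cite[Proposition 3.4]{LZ24b} already used in Step 2 of the proof of Proposition \ref{prop: second stra}, and noting that on the standard models $\pi_i\colon(\mX^{(i)},\sum_j\mD^{(i)}_j)\to T_i$ built there the $\bR$-Cartier domain contains all of $F^\circ$ by construction.
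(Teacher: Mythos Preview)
Your proposal has a genuine gap, and you have correctly located it yourself: the circularity in invoking condition (6) of Proposition \ref{prop: second stra}. The resolution you sketch does not work. You claim that $\bR$-Cartierness of $K_{\mY/S}+\sum_j x_j\mB_j$ is ``stable under the whole affine subspace spanned''; while it is true that the set of coefficient vectors for which this divisor is $\bR$-Cartier is an affine subspace of $\bR^k$, knowing that a single point $(c_1,\dots,c_k)\in F^\circ$ lies in it tells you nothing about whether the whole face $F^\circ$ does. Your subsequent appeal to Step~2 of the proof of Proposition \ref{prop: second stra} only says that on the \emph{standard} families $\pi_i$ the $\bR$-Cartier domain contains $F^\circ$; this gives fibrewise $\bR$-Cartierness for $(\mY_t,\sum_j x_j\mB_{j,t})$ (since each fibre is isomorphic to some fibre of a $\pi_i$), but fibrewise $\bR$-Cartier does not imply $\bR$-Cartier on the total space $\mY$ without precisely the kind of base-change morphism $S\to S_i$ you are trying to produce. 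So the bootstrap never gets off the ground.

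The paper's proof avoids Proposition \ref{prop: second stra} entirely. It takes only the chamber decomposition from Propositions \ref{prop: first stra} and \ref{prop: uni fano} (for both log Fano and weak log Fano domains), and then resolves the $\bR$-Cartier problem by a direct geometric construction: pass to a small $\bQ$-factorial modification $g\colon (\mY',\sum_j\mB'_j)\to (\mY,\sum_j\mB_j)$ over $S$. On $\mY'$ every Weil divisor is automatically $\bQ$-Cartier, so $K_{\mY'/S}+\sum_j x_j\mB'_j$ is $\bR$-Cartier for \emph{all} coefficient vectors, with no argument needed. The fibres of $\mY'\to S$ are now only weak log Fano at $(c_1,\dots,c_k)$, but the assumption $\mG=\mG^{\dsm}$ ensures they still lie in $\widehat{\mG}$, so the weak-log-Fano half of Proposition \ref{prop: uni fano} propagates this across $F^\circ$. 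Finally one recovers $(\mY,\sum_j x_j\mB_j)\to S$ as the relative anticanonical model of $(\mY',\sum_j x_j\mB'_j)\to S$ for every $(x_1,\dots,x_k)\in F^\circ$ (cf.\ \cite[Proof of Proposition 7.3]{LZ24b}), which gives the desired $\bR$-Gorenstein log Fano conclusion. You already named the small $\bQ$-factorialization and the $\mG=\mG^{\dsm}$ hypothesis as the likely key; the point is that they are not auxiliary tools to justify a Hilbert-scheme pullback, but the entire argument.
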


\begin{proof}
Since $\widehat{\mG}$ is assumed to be log bounded, there exists a family of couples over a finite type scheme $\pi: (\mX, \sum_{j=1}^k\mD_j)\to T$ such that $\widehat{\mG}$ is contained in the set of fibers of $\pi$. By the arguments in the proof of Proposition \ref{prop: first stra} and Proposition \ref{prop: uni fano}, up to replacing $T$ with another finite type scheme, there exists a finite rational polytope chamber decomposition $P=\cup_s P_s$ satisfying the following property: for any $t\in T$ and any face $F$ of any chamber $P_s$, if $(\mX_t, \sum_{j=1}^k c_j\mD_{j,t})$ is a log Fano pair (resp. weak log Fano pair) for some $(c_1,...,c_k)\in F^\circ$, then $(\mX_t, \sum_{j=1}^k x_j\mD_{j,t})$ is a log Fano pair (resp. weak log Fano pair) for any $(x_1,...,x_k)\in F^\circ$.

Now, given an $\bR$-Gorenstein family of K-semistable log Fano pairs $(\mY, \sum_{j=1}^kc_j\mB_j)\to S$ for some $(c_1,...,c_k)\in F^\circ$ as in the statement of the proposition, we consider the following small $\bQ$-factorial modification:
$$g: (\mY', \sum_{j=1}^kc_j\mB'_j)\to (\mY, \sum_{j=1}^kc_j\mB_j) /S.$$
Then we see $(\mY', \sum_{j=1}^kc_j\mB'_j)\to S$ is an $\bR$-Gorenstein family of K-semistable weak log Fano pairs. By the description of the sets $\mG$ and $\widehat{\mG}$ (see Section \ref{sec: intro}), we see that $(\mY'_t, \sum_{j=1}^k\mB'_{j,t})$ is contained in $\widehat{\mG}$ for any $t\in S$. Recalling the choice of the chamber decomposition in the previous paragraph, we deduce that $(\mY', \sum_{j=1}^kx_j\mB'_j)\to S$ is an $\bR$-Gorenstein family of weak log Fano pairs for any $(x_1,...,x_k)\in F^\circ$. By the same argument of \cite[Proof of Proposition 7.3]{LZ24b}, we know that $(\mY, \sum_{j=1}^kx_j\mB_j)\to S$ is the anticanonical model (over $S$) of $(\mY', \sum_{j=1}^kx_j\mB'_j)\to S$ for any $(x_1,...,x_k)\in F^\circ$, which implies that $(\mY, \sum_{j=1}^kx_j\mB_j)\to S$ is an $\bR$-Gorenstein family of log Fano pairs for any $(x_1,...,x_k)\in F^\circ$. The proof is complete.
\end{proof}

\section{Semi-algebraic K-semistable domains}

In this section, we aim to show that $\Kss(X, \sum_{j=1}^kD_j)_P$ is semi-algebraic for any $(X, \sum_{j=1}^k D_j)\in \widehat{\mG}$, under the assumption that $\widehat{\mG}$ is log bounded. Suppose $P=\cup_s P_s$ is a finite rational polytope chamber decomposition, we have the following characterization:
$$\Kss(X, \sum_{j=1}^kD_j)_P=\cup_F \Kss(X, \sum_{j=1}^kD_j)_{F^\circ},$$
where $F$ runs through all faces of all chambers $P_s$'s. Thus it is enough to characterize $\Kss(X, \sum_{j=1}^kD_j)_{F^\circ}$. The following proposition is due to \cite[Proposition 8.8]{LZ24b}.

\begin{proposition}\label{prop: semi-alg-1}
Given a couple $(X, \sum_{j=1}^kD_j)\in \widehat{\mG}$ and a rational polytope $Q\subset P$. Suppose $(X, \sum_{j=1}^kx_jD_j)$ is log Fano for any $(x_1,...,x_k)\in Q$, then $\Kss(X, \sum_{j=1}^kD_j)_Q$ is a semi-algebraic set. Moreover, there exists a finite rational polytope chamber decomposition $Q=\cup_s Q_s$ satisfying the following property: for any face $q$ of any chamber $Q_s$, the restricted K-semistable domain $\Kss(X, \sum_{j=1}^kD_j)_{q^\circ}$ (if non-empty) is cut out by finitely many algebraic equations $G_l(x_1,...,x_k)\geq 0$ (indexed by $l$), where each $G_l$ is of the following form for some prime divisor $E_l$ over $X$:
$$G_l(x_1,...,x_k)=\vol(-K_X-\sum_{j=1}^kx_jD_j)\cdot\left(A_{X, \sum_{j=1}^kx_jD_j}(E_l)-S_{X, \sum_{j=1}^kx_jD_j}(E_l)\right). $$
\end{proposition}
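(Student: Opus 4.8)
\emph{Strategy.} The plan is to reduce the K-semistability of $(X,\sum_j x_jD_j)$, as $x=(x_1,\dots,x_k)$ ranges over $Q$, to finitely many polynomial inequalities on each chamber of a rational polytope subdivision of $Q$, following the approach of \cite[Proposition 8.8]{LZ24b}. There are two inputs: a \emph{boundedness} input, ensuring that over each chamber only finitely many prime divisors over $X$ are needed to detect K-semistability; and a \emph{piecewise-polynomiality} input, ensuring that $\vol(-K_X-\sum_j x_jD_j)\cdot\beta_{X,\sum_j x_jD_j}(E)$ is a polynomial in $x$ on each chamber.

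\emph{Reduction to finitely many divisors.} Since $\widehat{\mG}$ is log bounded and $(X,\sum_j x_jD_j)$ is log Fano for all $x\in Q$, I would first use the families produced in Proposition \ref{prop: second stra}, together with the boundedness of complements and the theory of stability thresholds ($\delta$-invariants, approximated by $m$-basis-type divisors for bounded $m$), to produce a bounded family $\mE\to Z$ of prime divisors over $X$ such that for every $x\in Q$,
$$(X,\textstyle\sum_j x_jD_j)\ \text{is K-semistable}\iff \beta_{X,\sum_j x_jD_j}(E)\ge 0\ \text{for every member}\ E\ \text{of}\ \mE.$$
The point is that whenever $(X,\sum_j x_jD_j)$ fails to be K-semistable it is destabilized by a prime divisor, and by boundedness such a divisor can be taken from one bounded family, independent of $x\in Q$. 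Stratifying $Z$ constructibly so that on each stratum the discrete invariants attached to $\mE_z$ — the log discrepancy $A_X(\mE_z)$, the coefficients $\ord_{\mE_z}(f^*D_j)$, and the rational polyhedral chamber structure governing the relevant volume functions — are locally constant, and picking on each stratum one divisor realizing the infimum of $\beta_{X,\sum_j x_jD_j}(\cdot)$ over that stratum, one arrives, after passing to a finite rational polytope chamber decomposition $Q=\cup_s Q_s$, at the following: for every face $q$ of every $Q_s$ there are finitely many prime divisors $E_1,\dots,E_n$ over $X$ with $(X,\sum_j x_jD_j)$ K-semistable for $x\in q^\circ$ if and only if $\beta_{X,\sum_j x_jD_j}(E_l)\ge 0$ for all $l$.

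\emph{Piecewise-polynomiality.} Fix a prime divisor $E$ over $X$, extracted by a proper birational morphism $f\colon Y\to X$, and write $\Delta_x=\sum_j x_jD_j$. Then $A_{X,\Delta_x}(E)=A_X(E)-\ord_E\!\big(f^*(K_X+\Delta_x)-f^*K_X\big)$ is affine-linear in $x$ on $Q$, and $\vol(-K_X-\Delta_x)=(-K_X-\Delta_x)^d$ is a polynomial in $x$ on $Q$ because $-K_X-\Delta_x$ is ample there. For the term
$$\vol(-K_X-\Delta_x)\cdot S_{X,\Delta_x}(E)=\int_0^{\infty}\vol\!\big({-}f^*(K_X+\Delta_x)-tE\big)\,{\rm d}t,$$
I would invoke finite generation of the relevant section rings (available since $(X,\Delta_x)$ is log Fano): the volume function is piecewise polynomial with respect to a finite rational polyhedral decomposition of the affine subspace swept out by $-f^*(K_X+\Delta_x)-tE$ as $(x,t)$ varies, so after integrating in $t$ the function $x\mapsto \vol(-K_X-\Delta_x)\cdot S_{X,\Delta_x}(E)$ is piecewise polynomial on a finite rational polyhedral decomposition of $Q$. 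Hence each
$$G_{E_l}(x)=\vol\Big({-}K_X-\textstyle\sum_j x_jD_j\Big)\cdot\Big(A_{X,\Delta_x}(E_l)-S_{X,\Delta_x}(E_l)\Big)$$
is piecewise polynomial on a finite rational polyhedral decomposition of $Q$. Refining $Q=\cup_s Q_s$ so that on every face $q$ each of $G_{E_1},\dots,G_{E_n}$ restricts to a genuine polynomial, we obtain
$$\Kss\Big(X,\textstyle\sum_jD_j\Big)_{q^\circ}=\big\{x\in q^\circ : G_{E_l}(x)\ge 0,\ l=1,\dots,n\big\},$$
which is semi-algebraic; as there are finitely many faces $q$, the union $\Kss(X,\sum_jD_j)_Q$ is semi-algebraic, and the refined decomposition is the one asserted.

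\emph{Main obstacle.} The hard part is the first step: producing a single bounded family of prime divisors that detects K-semistability uniformly for all $x\in Q$, and then genuinely cutting it down, via a constructibility argument, to finitely many divisors on each chamber. This is exactly where the log boundedness of $\widehat{\mG}$ and the Hilbert-scheme families of Proposition \ref{prop: second stra} are needed. The piecewise-polynomiality of the Fujita--Li invariant and the resulting semi-algebraicity are then routine, modulo the standard but technical facts about volume functions.
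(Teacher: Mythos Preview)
Your overall strategy is reasonable but diverges from the paper in two substantive ways, one of which is a genuine gap.

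\textbf{Unnecessary hypothesis.} You begin by invoking the log boundedness of $\widehat{\mG}$ and the families from Proposition \ref{prop: second stra}. The paper explicitly notes, right after the statement, that this proposition does \emph{not} assume $\widehat{\mG}$ is log bounded. The paper's substitute is the observation that since $Q$ is compact and $(X,\sum_j x_jD_j)$ is log Fano for every $x\in Q$, both $\vol(-K_X-\sum_j x_jD_j)$ and $\delta(X,\sum_j x_jD_j)$ are continuous, hence bounded below by positive constants $\epsilon_0,\delta_0$. This yields log boundedness of an auxiliary set $\mA(\delta_0,\epsilon_0)$ of couples (those with some $\delta\ge\delta_0$ weak log Fano structure and volume $\ge\epsilon_0$ on $Q$), which is enough to run the argument. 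Your appeal to Proposition \ref{prop: second stra} is circular in spirit: that proposition itself requires the assumption you should not be using here.

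\textbf{Different finiteness mechanism.} The paper does not directly produce a bounded family of prime divisors and then extract finitely many by stratification. Instead it follows \cite[Proposition 8.8]{LZ24b}: it parametrizes via Hilbert schemes the special test configurations of $(X,\sum_j x_jD_j)$ whose central fibers lie in $\mA(\delta_0,\epsilon_0)$, obtaining finitely many $\bR$-Gorenstein families $(\mX^{(i)},\sum_j x_j\mD_j^{(i)})\to Z_i$ over bases carrying group actions $G_i$. The generalized Futaki invariant of any such test configuration is the weight $\Fut_i(x;\rho)$ of the CM line bundle $\Lambda(x)_i$ along a one-parameter subgroup $\rho\colon\bG_m\to G_i$, and by Proposition-Definition \ref{def: R-CM} this is a polynomial in $x$. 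The reduction to finitely many $\rho$'s is then a GIT-type argument on the finite-dimensional space of $G_i$-linearized line bundles, not a stratification of a family of divisors. Your step ``pick on each stratum one divisor realizing the infimum of $\beta$'' is the weak point: the infimum over a positive-dimensional stratum is taken for each $x$ separately, and the minimizing divisor may jump with $x$, so a single representative per stratum need not cut out the K-semistable locus. One would have to argue that the function $G_E(x)$ is constant in $E$ along each stratum, which you gesture at but do not establish.

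In short: replace the log boundedness of $\widehat{\mG}$ by the $(\epsilon_0,\delta_0)$ bounds coming from compactness of $Q$, and either carry out the CM-line-bundle/one-parameter-subgroup argument as in \cite[Proposition 8.8]{LZ24b}, or give a careful proof that after constructible stratification the Fujita--Li function $x\mapsto G_E(x)$ is independent of $E$ within each stratum.
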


Note that in this proposition we do not assume that $\widehat{\mG}$ is log bounded.

\begin{proof}
First note that $\vol(-K_X-\sum_{j=1}^kx_jD_j)$ and $\delta(X, \sum_{j=1}^kx_jD_j)$ are both continuous on $(x_1,...,x_k)\in Q$ (e.g. \cite[Proposition 2.10]{LZ24a}). Thus there exist positive real numbers $\epsilon_0$ and $\delta_0$ (which may be assumed $<1$) such that 
$$\vol(-K_X-\sum_{j=1}^kx_jD_j)\geq \epsilon_0\ \ \text{and}\ \ \  \delta(X, \sum_{j=1}^kx_jD_j)\geq \delta_0$$
for any $(x_1,...,x_k)\in Q$. The rest of the proof is almost identical to that of \cite[Proposition 8.8]{LZ24b}. We sketch it below.

\

Step 1. We construct a set of couples $\mA(\delta_0, \epsilon_0)$ consisting of $(Y, \sum_{j=1}^kB_j)$ satisfying: 
\begin{enumerate}
\item $Y$ is normal projective of dimension $d$ and $B_j$'s are effective divisors;
\item $(Y, \sum_{j=1}^kc_jB_j)$ is a weak log Fano pair for some vector $(c_1,...,c_k)\in Q$ with $\delta(Y, \sum_{j=1}^kc_jB_j)\geq \delta_0$;
\item $\vol(-K_Y-\sum_{j=1}^kx_jB_j)\geq \epsilon_0$ for any $(x_1,...,x_k)\in Q$.
\end{enumerate}
We know that $\mA(\delta_0, \epsilon_0)$ is log bounded (e.g. \cite[Section 7]{LZ24b}), and there exists a finite rational polytope chamber decomposition $Q=\cup_s Q_s$ such that 
\begin{enumerate}
\item for any couple $(Y, \sum_{j=1}^kB_j)\in \mA(\delta_0, \epsilon_0)$ and any face $q$ of any chamber $Q_s$, if $(Y, \sum_{j=1}^kc_jB_j)$ is log Fano (resp. weak log Fano) for some $(c_1,...,c_k)\in q^\circ$, then $(Y, \sum_{j=1}^kx_jB_j)$ is log Fano (resp. weak log Fano) for any $(x_1,...,x_k)\in q^\circ$ (\cite[Proposition 7.2]{LZ24b});
\item for any family of couples $(\mY, \sum_{j=1}^k\mB_j)\to Z$ over a smooth base $Z$ such that every fiber is contained in $\mA(\delta_0, \epsilon_0)$, if for some $(c_1,...,c_j)\in q^\circ$, $(\mY, \sum_{j=1}^kc_j\mB_j)\to Z$ is an $\bR$-Gorenstein family of log Fano pairs with $\delta(\mY_t, \sum_{j=1}^kc_j\mB_{j,t})\geq \delta_0$ for every $t\in Z$, then $(\mY, \sum_{j=1}^kx_j\mB_j)\to Z$ is an $\bR$-Gorenstein family of log Fano pairs for any $(x_1,...,x_k)\in q^\circ$ (e.g. \cite[Proposition 7.3]{LZ24b}).
\end{enumerate}

\

Step 2. Fix a face $q$ of some chamber $Q_s$ and we only consider $(x_1,...,x_k)\in q^\circ$. We use the Hilbert scheme argument (see \cite[Step 2 of Proposition 8.8]{LZ24b} or the proof of Proposition \ref{prop: second stra}) to construct finitely many families $\pi_i: (\mX^{(i)}, \sum_{j=1}^k\mD^{(i)}_j)\to Z_i$ such that $Z_i$ is smooth admitting an algebraic group action $G_i$, and 
$$(\mX^{(i)}, \sum_{j=1}^kx_j\mD^{(i)}_j)\to Z_i$$ 
is an $\bR$-Gorenstein family of log Fano pairs for any $(x_1,...,x_k)\in q^\circ$ and any index $i$; moreover, for any special test configuration $(\mX, \sum_{j=1}^kx_j\mD_j)\to \bA^1$ of $(X, \sum_{j=1}^kx_jD_j)$ with $\delta(\mX_0, \sum_{j=1}^kx_j\mD_{j,0})\geq \delta_0$, there exists an index $i$ and a morphism $\bA^1\to Z_i$ such that the special test configuration is obtained by the pull-back of $(\mX^{(i)}, \sum_{j=1}^kx_j\mD^{(i)}_j)\to Z_i$. 

\

Step 3. We still focus on $(x_1,...,x_k)\in q^\circ$. For any $\bR$-Gorenstein family of log Fano pairs $\pi_i: (\mX^{(i)}, \sum_{j=1}^kx_j\mD^{(i)}_j)\to Z_i$ as in step 2, there is a CM line bundle $\Lambda(x_1,...,x_k)_i$ on $Z_i$. For any special test configuration of $(X, \sum_{j=1}^kx_jD_j)$ as in step 2, there is a one parameter subgroup $\rho: \bG_m\to G_i$ such that the generalized Futaki invariant $\Fut(\mX, \sum_{j=1}^kx_j\mD_j)$ can be identified with a weight function $\Fut_i(x_1,...,x_k; \rho)$. By \cite[Step 5 of Prop 8.8]{LZ24b}, for each index $i$, it suffices to check $\Fut_i(x_1,...,x_k; \rho)\geq 0$ for finitely many one parameter subgroups $\rho$'s of $G_i$, where each $\rho$ corresponds to a special test configuration of $(X, \sum_{j=1}^kx_jD_j)$.

\

Above all, for any face $q$ of any chamber $Q_s$ in step 1, the restricted K-semistable domain $\Kss(X, \sum_{j=1}^kD_j)_{q^\circ}$
is cut out by finite inequalities
$$\Fut_i(x_1,...,x_k; \rho_{il})\geq 0 $$
indexes by $i$ and $l$, where each $\rho_{il}$ corresponds to a special test configuration of the log Fano pair $(X, \sum_{j=1}^kx_jD_j)$ and 
$\Fut_i(x_1,...,x_k; \rho_{il})$ is the generalized Futaki invariant. The proof is complete.
\end{proof}

\begin{proposition}\label{prop: enough deg}
Assume $\widehat{\mG}$ is log bounded. For any given $(X, \sum_{j=1}^kD_j)\in \widehat{\mG}$ and $(c_1,...,c_k)\in P$, to test K-semistability of $(X, \sum_{j=1}^kc_jD_j)$, it suffices to test all special test configurations $(\mX, \sum_{j=1}^kc_j\mD_j)\to \bA^1$ with $(\mX_0, \sum_{j=1}^k\mD_{j,0})\in \widehat{\mG}$.
\end{proposition}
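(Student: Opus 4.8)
The plan is to reduce K-semistability to the special test configuration criterion and then, in the destabilizing direction, to produce a \emph{specific} destabilizing special test configuration whose central fiber already lies in $\widehat{\mG}$. By \cite{LX14} and \cite{LZ24a}, a log Fano pair is K-semistable if and only if the generalized Futaki invariant of every special test configuration is nonnegative (equivalently $\beta_{X,\sum_jc_jD_j}(E)\geq 0$ for all prime divisors $E$ over $X$; see Definition \ref{def: Fujita-Li}). If $(X,\sum_jc_jD_j)$ is already K-semistable there is nothing to prove, since then the Futaki invariant is nonnegative for \emph{all} special test configurations, in particular for those whose central fiber lies in $\widehat{\mG}$. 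So I may assume $(X,\sum_jc_jD_j)$ is K-unstable; the task then becomes to exhibit one special test configuration $(\mX,\sum_jc_j\mD_j)\to\bA^1$ of $(X,\sum_jc_jD_j)$ with negative generalized Futaki invariant such that $(\mX_0,\sum_j\mD_{j,0})\in\widehat{\mG}$.

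First I would combine optimal destabilization for the stability threshold with K-semistable reduction: since $\delta(X,\sum_jc_jD_j)<1$, the finite generation theorem of \cite{LXZ22} (together with \cite{LWX21} and \cite{LZ24a}) produces a special test configuration $(\mX,\sum_jc_j\mD_j)\to\bA^1$ of $(X,\sum_jc_jD_j)$ with negative generalized Futaki invariant whose central fiber $(\mX_0,\sum_jc_j\mD_{j,0})$ is a K-semistable log Fano pair --- iterating the degeneration finitely many times if necessary, the iteration terminating because the generalized Futaki invariant strictly decreases while all objects stay in a bounded family. Since $(c_1,\dots,c_k)\in P$, the couple $(\mX_0,\sum_j\mD_{j,0})$ carries a coefficient vector in $P$ making it a K-semistable weak log Fano pair.

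The crucial remaining point is that $(\mX_0,\sum_j\mD_{j,0})$ lies in $\widehat{\mG}$. Here I would use that $(X,\sum_jD_j)\in\widehat{\mG}$: fix a witnessing family $(\mW,\sum_j\mC_j)\to C\ni 0$ as in Definition \ref{def: K-complete}, with general fibers in $\mG$, central fiber $(X,\sum_jD_j)$, and $\bR$-Gorenstein K-semistable weak log Fano fibers after weighting by some $(c_1^*,\dots,c_k^*)\in P$. The destabilizing special test configuration $\mX$ is encoded by a point of a Hilbert scheme of the type produced in Proposition \ref{prop: second stra} together with a one-parameter subgroup of a $\PGL$-group acting on it; spreading this datum out over the base $C$ --- using Proposition \ref{prop: second stra} and the openness of the log Fano and K-semistable conditions --- I would build, after shrinking $C$, a family over a neighborhood of $(0,0)$ in $C\times\bA^1$ restricting to $\mW\to C$ over $\{s=1\}$ and to $\mX\to\bA^1$ over $\{t=0\}$. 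Its restriction to $\{s=0\}$ is then a one-parameter family over $C$ with fiber $(\mX_0,\sum_j\mD_{j,0})$ over $t=0$ and, over general $t$, a special degeneration of the member $(\mW_t,\sum_j\mC_{j,t})\in\mG$; shrinking once more and using openness of K-semistability --- so that, for a suitable coefficient vector and by Proposition \ref{prop: uni gorenstein}, this $\{s=0\}$-family is an $\bR$-Gorenstein family of K-semistable weak log Fano pairs --- together with $\mG=\mG^{\dsm}$ and the log boundedness of $\widehat{\mG}$, one recognizes this $\{s=0\}$-family (possibly after a dominant small modification) as a witnessing family for $(\mX_0,\sum_j\mD_{j,0})$ in the sense of Definition \ref{def: K-complete}. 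Hence $(\mX_0,\sum_j\mD_{j,0})\in\widehat{\mG}$, and $\mX$ is the destabilizing special test configuration we wanted.

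I expect the main obstacle to be this last point --- in essence the assertion that $\widehat{\mG}$ is closed under passing to K-semistable special degenerations, established by spreading a destabilizing test configuration out over the base of a family witnessing membership in $\widehat{\mG}$. This is where the log boundedness of $\widehat{\mG}$, the Hilbert-scheme packaging of Proposition \ref{prop: second stra}, and openness of K-semistability must be combined carefully; by comparison, the reduction to special test configurations and the appeal to optimal destabilization are routine.
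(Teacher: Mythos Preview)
Your approach has a genuine gap at the destabilization step, and the paper proceeds by a quite different idea.

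In your step~2 you assert that for a K-unstable pair, optimal destabilization (via \cite{LXZ22}, \cite{LWX21}, \cite{LZ24a}) yields a special test configuration with negative Futaki invariant and \emph{K-semistable} central fiber, ``iterating if necessary''. This is not what those results give. When $\delta(X,\sum_jc_jD_j)<1$, the optimal destabilizing divisor of \cite{LXZ22} degenerates $(X,\sum_jc_jD_j)$ to a pair with the \emph{same} stability threshold $\delta<1$, hence still K-unstable; iterating does not improve $\delta$. The theorem of \cite{LWX21} runs the other way (K-semistable $\Rightarrow$ K-polystable degeneration) and is of no help here. At a fixed K-unstable coefficient vector there is in general no special destabilization with K-semistable central fiber, so a strategy that stays at $(c_1,\dots,c_k)$ cannot succeed.

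The paper avoids this by \emph{moving in coefficient space}. It applies Proposition~\ref{prop: semi-alg-1} to a small rational polytope $Q\subset F^\circ$ containing $(c_1,\dots,c_k)$: on each face $q^\circ$ of the resulting chamber decomposition, the K-semistable domain is cut out by finitely many inequalities $\beta_{X,\sum_jx_jD_j}(E_l)\ge 0$ for specific prime divisors $E_l$. If $(c_1,\dots,c_k)$ lies outside this domain, one of these $E_l$'s, call it $E$, has $\beta(E)<0$ at $(c_1,\dots,c_k)$ while its zero locus meets $\Kss(X,\sum_jD_j)_{q^\circ}$ at some $(b_1,\dots,b_k)$. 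There $(X,\sum_jb_jD_j)$ is K-semistable with $\beta(E)=0$, so $E$ induces a special test configuration with K-semistable central fiber; by Definition~\ref{def: K-complete} the central couple lies in $\widehat{\mG}$ directly, with no spreading-out argument needed. Proposition~\ref{prop: uni gorenstein} then guarantees that the same test configuration remains special at every $(x_1,\dots,x_k)\in q^\circ$, in particular at $(c_1,\dots,c_k)$, where its Futaki invariant is negative.

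Your step~3 has an independent problem as well. In the spread-out two-parameter family, the $\{s=0\}$-slice has general fiber a \emph{special degeneration of} a member of $\mG$, not a member of $\mG$. The hypothesis $\mG=\mG^{\dsm}$ closes $\mG$ only under dominant small modifications, not under special degenerations, so condition~(1) of Definition~\ref{def: K-complete} is not verified by your construction.
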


\begin{proof}
Under the assumption that $\widehat{\mG}$ is log bounded, there exists a finite rational polytope chamber decomposition $P=\cup_s P_s$ such that all the conditions in Proposition \ref{prop: second stra} and Proposition \ref{prop: uni gorenstein} are satisfied. Assume $(X, \sum_{j=1}^kc_jD_j)$ is a log Fano pair. To test K-semistability of $(X, \sum_{j=1}^kc_jD_j)$, we may assume $(c_1,...,c_k)\in F^\circ$, where $F$ is a face of some chamber $P_s$. Let $Q\subset F^\circ$ be a rational polytope containing $(c_1,...,c_k)$. Thus we see that $(X, \sum_{j=1}^kx_jD_j)$ is log Fano for any $(x_1,...,x_k)\in Q$. By Proposition \ref{prop: semi-alg-1}, $\Kss(X, \sum_{j=1}^kD_j)_Q$ is a semi-algebraic set, and there exists a finite rational polytope chamber decomposition $Q=\cup_r Q_r$ satisfying the following property: for any face $q$ of any chamber $Q_r$, the restricted K-semistable domain $\Kss(X, \sum_{j=1}^kD_j)_{q^\circ}$ (if non-empty) is cut out by finitely many algebraic equations $G_l(x_1,...,x_k)\geq 0$ (indexed by $l$), where each $G_l$ is the Fujita-Li invariant of $(X, \sum_{j=1}^kx_jD_j)$ with respect to some prime divisor $E_l$ over $X$. 

Note that there exists some face $q$ of some chamber $Q_r$ such that $(c_1,...,c_k)\in q^\circ$. 
Suppose $(c_1,...,c_k)$ is not contained in $\Kss(X, \sum_{j=1}^kD_j)_{q^\circ}$. By the description of $\Kss(X, \sum_{j=1}^kD_j)_{q^\circ}$ in the previous paragraph, there exists a prime divisor $E$ over $X$ such that the following two conditions are satisfied:
\begin{enumerate}
\item $(\clubsuit)$:\  $A_{X, \sum_{j=1}^kc_jD_j}(E)-S_{X, \sum_{j=1}^kc_jD_j}(E)<0$;
\item $(\clubsuit\clubsuit)$:\  the set $\bigg\{(x_1,...,x_k)\in q^\circ\ |\ \text{$A_{X, \sum_{j=1}^kx_jD_j}(E)-S_{X, \sum_{j=1}^kx_jD_j}(E)= 0$}\bigg\}$ admits non-empty intersection with $\Kss(X, \sum_{j=1}^kD_j)_{q^\circ}$.
\end{enumerate}
The second condition $(\clubsuit\clubsuit)$ above implies that there exists $(b_1,...,b_k)\in q^\circ$ such that the log Fano pair $(X, \sum_{j=1}^kb_jD_j)$ is K-semistable, and $E$ induces a special test configuration $(\mX, \sum_{j=1}^kb_j\mD_j)\to \bA^1$ of $(X, \sum_{j=1}^kb_jD_j)$ such that the central fiber $(\mX_0, \sum_{j=1}^kb_j\mD_{j,0})$ is also a K-semistable log Fano pair, i.e. $(\mX_0, \sum_{j=1}^k\mD_{j,0})\in \widehat{\mG}$. By Proposition \ref{prop: uni gorenstein}, we know that 
$(\mX, \sum_{j=1}^kx_j\mD_j)\to \bA^1$ is a special test configuration of 
$(X, \sum_{j=1}^kx_jD_j)$ for any $(x_1,...,x_k)\in q^\circ$.  The first condition $(\clubsuit)$ then implies that $\Fut(\mX, \sum_{j=1}^kc_j\mD_j)<0$. The proof is complete.
\end{proof}

\begin{proposition}\label{prop: semi-alg-2}
Assume $\widehat{\mG}$ is log bounded. For any given $(X, \sum_{j=1}^kD_j)\in \widehat{\mG}$, the K-semistable domain $\Kss(X, \sum_{j=1}^kD_j)_P$ is semi-algebraic.
\end{proposition}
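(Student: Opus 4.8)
The plan is to reduce the global statement to a finite collection of local statements over faces of a rational polytope chamber decomposition, and then invoke the already-established local semi-algebraicity together with Proposition \ref{prop: enough deg}. First I would fix, using the hypothesis that $\widehat{\mG}$ is log bounded, a finite rational polytope chamber decomposition $P=\cup_s P_s$ that simultaneously realizes the conclusions of Proposition \ref{prop: uni fano} (so that the log Fano locus of a fixed couple is a union of relatively open faces) and of Propositions \ref{prop: second stra} and \ref{prop: uni gorenstein}. With such a decomposition in hand, and using the observation already recorded at the start of Section 6, we have
$$\Kss(X, \textstyle\sum_{j=1}^kD_j)_P=\bigcup_F \Kss(X, \textstyle\sum_{j=1}^kD_j)_{F^\circ},$$
where $F$ ranges over all faces of all chambers $P_s$. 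Since a finite union of semi-algebraic sets is semi-algebraic, it suffices to prove that each $\Kss(X, \sum_{j=1}^kD_j)_{F^\circ}$ is semi-algebraic.

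Next I would dispose of the faces $F$ on which $(X,\sum_j x_jD_j)$ fails to be log Fano: by the choice of the chamber decomposition via Proposition \ref{prop: uni fano}, on such a face the K-semistable domain $\Kss(X,\sum_j D_j)_{F^\circ}$ is empty (K-semistability is only defined for log Fano pairs here), hence trivially semi-algebraic. So we may assume $(X,\sum_j x_jD_j)$ is log Fano for every $(x_1,\dots,x_k)\in F^\circ$. Now pick an exhaustion of $F^\circ$ by rational polytopes: choose a sequence (or, since we only need finitely many for any fixed bounded question, a suitable finite family) of rational polytopes $Q_n\subset F^\circ$ with $\cup_n Q_n=F^\circ$, for instance by intersecting $F$ with the rational hyperplanes at distance $1/n$ from $\partial F$. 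On each $Q_n$ the couple $(X,\sum_j x_jD_j)$ is log Fano for all $(x_1,\dots,x_k)\in Q_n$, so Proposition \ref{prop: semi-alg-1} applies and gives that $\Kss(X,\sum_j D_j)_{Q_n}$ is semi-algebraic.

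The remaining point — and this is the main obstacle — is to pass from the semi-algebraicity of the restrictions $\Kss(X,\sum_j D_j)_{Q_n}$ to a fixed rational polytope to semi-algebraicity of $\Kss(X,\sum_j D_j)_{F^\circ}=\cup_n \Kss(X,\sum_j D_j)_{Q_n}$, which is a priori only a countable (not finite) union of semi-algebraic sets. The resolution is that the defining inequalities are uniform: by Proposition \ref{prop: enough deg} (and the boundedness of $\widehat{\mG}$ built into its proof via Propositions \ref{prop: second stra} and \ref{prop: uni gorenstein}), K-semistability of $(X,\sum_j c_jD_j)$ for $(c_1,\dots,c_k)\in F^\circ$ is tested by special test configurations with central fiber in $\widehat{\mG}$, and these are parametrized by finitely many bounded families $\pi_i:(\mX^{(i)},\sum_j x_j\mD^{(i)}_j)\to Z_i$ over finite type bases with algebraic group actions, uniformly over $F^\circ$. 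Thus, exactly as in Step 3 of Proposition \ref{prop: semi-alg-1}, there is a single finite list of CM/Futaki weight functions $\Fut_i(x_1,\dots,x_k;\rho_{il})$, which are (piecewise) polynomial in $(x_1,\dots,x_k)$ on $F^\circ$ (using Proposition-Definition \ref{def: R-CM} and Proposition \ref{prop: Q-PT}), such that
$$\Kss(X, \textstyle\sum_{j=1}^kD_j)_{F^\circ}=\bigl\{(x_1,\dots,x_k)\in F^\circ \ \big|\ \Fut_i(x_1,\dots,x_k;\rho_{il})\ge 0 \text{ for all } i,l\bigr\}.$$
This exhibits $\Kss(X,\sum_j D_j)_{F^\circ}$ as a finite intersection of sets cut out by polynomial inequalities intersected with the semi-algebraic set $F^\circ$, hence semi-algebraic. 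Taking the finite union over all faces $F$ of all chambers $P_s$ then gives that $\Kss(X,\sum_j D_j)_P$ is semi-algebraic, completing the proof. The only subtlety to be careful about is checking that the finite list of test configurations (equivalently, the finite list of one-parameter subgroups in the bounded families) can genuinely be chosen independently of the point in $F^\circ$ — but this is precisely what the combination of Proposition \ref{prop: second stra}, Proposition \ref{prop: uni gorenstein}, and Proposition \ref{prop: enough deg} provides, since all the relevant parameter spaces are of finite type and the wall-defining functions vary algebraically.
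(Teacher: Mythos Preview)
Your proposal is correct and follows essentially the same route as the paper: reduce to each $F^\circ$, invoke Proposition \ref{prop: enough deg} to test only special test configurations with central fiber in $\widehat{\mG}$, parametrize these via the Hilbert-scheme families of Proposition \ref{prop: second stra}, and reduce to finitely many Futaki weight inequalities as in Step 3 of Proposition \ref{prop: semi-alg-1}. The exhaustion $F^\circ=\cup_n Q_n$ is a red herring you rightly discard (the test divisors from Proposition \ref{prop: semi-alg-1} on $Q_n$ depend on the degenerating $\epsilon_0,\delta_0$, so that union is genuinely infinite); the paper skips it and is slightly more explicit at one point you gloss over---it passes to the orbit closure $Z_i\subset S_i$ of the point representing $(X,\sum_j D_j)$ before invoking the finiteness of one-parameter subgroups, making precise that the relevant test configurations come from $\rho:\bG_m\to G_i$ rather than arbitrary maps $\bA^1\to S_i$.
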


\begin{proof}
Under the assumption that $\widehat{\mG}$ is log bounded, there exists a finite rational polytope chamber decomposition $P=\cup_s P_s$ such that all the conditions in Proposition \ref{prop: second stra} and Proposition \ref{prop: uni gorenstein} are satisfied. Let $f_i: (\mY^{(i)}, \sum_{j=1}^k\mB_j^{(i)})\to S_i$ be the finite families of couples (indexed by $i$) constructed in Proposition \ref{prop: second stra}.

Fix a face $F$ of some chamber $P_s$. It suffices to show the restricted K-semistable domain $\Kss(X, \sum_{j=1}^kD_j)_{F^\circ}$ is semi-algebraic. From now on, we only consider $(x_1,...,x_k)\in F^\circ$. We may assume $(X, \sum_{j=1}^kx_jD_j)$ is log Fano for any $(x_1,...,x_k)\in F^\circ$ and the restricted K-semistable domain $\Kss(X, \sum_{j=1}^kD_j)_{F^\circ}$ is non-empty. By the proof of Proposition \ref{prop: enough deg}, to test K-semistability of $(X, \sum_{j=1}^kx_jD_j)$, it suffices to test all special test configurations $(\mX, \sum_{j=1}^kx_j\mD_j)\to \bA^1$ of $(X, \sum_{j=1}^kx_jD_j)$ satisfying the following condition: there exists $(b_1,...,b_k)\in F^\circ$ such that $(\mX, \sum_{j=1}^kb_j\mD_j)\to \bA^1$ is an $\bR$-Gorenstein family of K-semistable log Fano pair.
Note that this implies $(\mX_0, \sum_{j=1}^k\mD_{j,0})\in \widehat{\mG}$. By Proposition \ref{prop: uni gorenstein}, we also see that 
$(\mX, \sum_{j=1}^kx_j\mD_j)\to \bA^1$ is a special test configuration of $(X, \sum_{j=1}^kx_jD_j)$ for any $(x_1,...,x_k)\in F^\circ$.  

By Proposition \ref{prop: second stra}, there exists an index $i$ and a morphism $\bA^1\to S_i$ such that the special test configuration $(\mX, \sum_{j=1}^kx_j\mD_j)\to \bA^1$ is obtained by the base-change of $(\mY^{(i)}, \sum_{j=1}^kx_j\mB^{(i)}_j)\to S_i$ along $\bA^1\to S_i$ for any $(x_1,...,x_k)\in F^\circ$. By the proof of Proposition \ref{prop: second stra} (see step 4 of the proof), $S_i$ admits an algebraic group action, denoted by $G_i$. Suppose $z\in S_i$ is the closed point such that 
$$(X, \sum_{j=1}^kD_j)\cong (\mY_z^{(i)}, \sum_{j=1}^k\mB^{(i)}_{j,z}).$$ 
Denote $Z_i$ the closure of the orbit of $z\in S_i$ under the $G_i$-action. Up to a $G_i$-equivariant resolution, we may assume $Z_i$ is smooth. Let 
$$g_i: (\mY_{Z_i}^{(i)}, \sum_{j=1}^k\mB^{(i)}_{j, Z_i})\to Z_i$$
be the pull-back of the family $f_i$ (see the first paragraph) along $Z_i\to S_i$. By the condition (5) in Proposition \ref{prop: second stra} (see step 4 of the proof), we get 
$$ (\mY_{Z_i}^{(i)}, \sum_{j=1}^kx_j\mB^{(i)}_{j, Z_i})\to Z_i $$
is an $\bR$-Gorenstein family of log Fano pairs for any $(x_1,...,x_k)\in F^\circ$. By the discussion above, we see the special test configuration $(\mX, \sum_{j=1}^kx_j\mD_j)\to \bA^1$ is induced by a one parameter subgroup $\rho: \bG_m\to G_i$, and it suffices to test all such one parameter subgroups (whose degenerations still correspond to fibers of $(\mY_{Z_i}^{(i)}, \sum_{j=1}^kx_j\mB^{(i)}_{j, Z_i})\to Z_i$) to test K-semistability of $(X, \sum_{j=1}^kx_jD_j)$. The rest of the proof is similar to step 3 of the proof of Proposition \ref{prop: semi-alg-1} (see also \cite[Steps 4-6 of Prop 8.8]{LZ24b}). We just sketch it here.

For the $\bR$-Gorenstein family of log Fano pairs $(\mY_{Z_i}^{(i)}, \sum_{j=1}^kx_j\mB^{(i)}_{j, Z_i})\to Z_i$, there is a CM line bundle $\Lambda(x_1,...,x_k)_i$ on $Z_i$. For the special test configuration $(\mX, \sum_{j=1}^kx_j\mD_j)\to \bA^1$ of $(X, \sum_{j=1}^kx_jD_j)$ induced by the one parameter subgroup $\rho: \bG_m\to G_i$, there is a weight function $\Fut_i(x_1,...,x_k; \rho)$ satisfying
$$\Fut(\mX, \sum_{j=1}^kx_j\mD_j)=\Fut_i(x_1,...,x_k; \rho).$$ 
By \cite[Step 6 of Prop 8.8]{LZ24b}, it suffices to check $\Fut_i(x_1,...,x_k; \rho)\geq 0$ for finitely many one parameter subgroups $\rho$'s of $G_i$, where each $\rho$ corresponds to a special test configuration of $(X, \sum_{j=1}^kx_jD_j)$. This implies that $\Kss(X, \sum_{j=1}^kD_j)_{F^\circ}$ is semi-algebraic. The proof is complete.
\end{proof}

\begin{theorem}\label{thm: kps cd}
Under the assumption that $\widehat{\mG}$ is log bounded, there exists a finite chamber decomposition $P=\cup_i A_i$ such that $A_i$'s are all semi-algebraic sets and mutually disjoint; moreover, for any $(X, \sum_{j=1}^kD_j)\in \widehat{\mG}$ and any piece $A_i$, the K-semistability and K-polystability of $(X, \sum_{j=1}^kx_jD_j)$ do not depend on $(x_1,...,x_k)\in A_i$.
\end{theorem}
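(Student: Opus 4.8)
The plan is to combine the finiteness of K-semistable domains (Proposition \ref{prop: finite kss domain}) with their semi-algebraicity (Proposition \ref{prop: semi-alg-2}), and then to refine the resulting decomposition so that K-polystability is also constant on each piece. First I would handle the K-semistability part. By Proposition \ref{prop: finite kss domain} there are only finitely many distinct subsets of $P$ of the form $\Kss(X,\sum_j D_j)_P$ as $(X,\sum_j D_j)$ ranges over $\widehat{\mG}$; call them $K_1,\dots,K_N$. By Proposition \ref{prop: semi-alg-2} each $K_a$ is semi-algebraic, hence so is its complement $P\setminus K_a$. Taking the common refinement generated by the finite collection $\{K_1,\dots,K_N\}$ — i.e. all nonempty sets of the form $\bigcap_{a\in I}K_a \cap \bigcap_{a\notin I}(P\setminus K_a)$ for $I\subset\{1,\dots,N\}$ — produces a finite partition $P=\amalg_\alpha B_\alpha$ into mutually disjoint semi-algebraic pieces with the property that, for each $\alpha$ and each $(X,\sum_j D_j)\in\widehat{\mG}$, the piece $B_\alpha$ is either entirely contained in $\Kss(X,\sum_j D_j)_P$ or entirely disjoint from it. Thus K-semistability of $(X,\sum_j x_j D_j)$ is constant on $(x_1,\dots,x_k)\in B_\alpha$.

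Next I would upgrade this to K-polystability. Fix a piece $B_\alpha$ on which all members of $\widehat{\mG}$ have constant K-semistability, and let $\widehat{\mG}_{B_\alpha}^{\Kss}$ denote those $(X,\sum_j D_j)\in\widehat{\mG}$ that are K-semistable throughout $B_\alpha$. For such a couple, K-polystability at a point $\fx\in B_\alpha$ fails precisely when there is a non-product special test configuration of $(X,\sum_j x_j D_j)$ with K-semistable central fiber. By Proposition \ref{prop: semi-alg-2} (via the Hilbert-scheme packaging of Proposition \ref{prop: second stra}), after passing to a finite rational polytope chamber decomposition $P=\cup_s P_s$ and working face by face, each relevant special test configuration is, for all $\fx$ simultaneously, induced by one of finitely many one-parameter subgroups $\rho$ of an algebraic group $G_i$ acting on a bounded family $g_i:(\mY^{(i)}_{Z_i},\sum_j x_j\mB^{(i)}_{j,Z_i})\to Z_i$; the degeneration of the relevant couple under $\rho$ is again a fiber of this family, and whether it is a product test configuration is encoded by whether $\rho$ lies in (a conjugate of) $\Aut(X,\sum_j x_j D_j)$. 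The set where the generalized Futaki invariant $\Fut_i(x_1,\dots,x_k;\rho)$ vanishes is algebraic in $\fx$ (it is a polynomial, by Proposition \ref{prop: Q-PT} and Proposition-Definition \ref{def: R-CM}), and the locus of $\fx$ for which a given $\rho$ is a product degeneration is likewise semi-algebraic (the automorphism-group condition is an algebraic constraint on the family $g_i$). So for each of the finitely many pairs $(i,\rho)$ the "bad set" — where $(X,\sum_j x_j D_j)$ is K-semistable but the test configuration from $\rho$ is a non-product one with K-semistable central fiber — is semi-algebraic. Intersecting $B_\alpha$ with these finitely many bad sets and their complements, and doing this across the finitely many couples in $\widehat{\mG}_{B_\alpha}^{\Kss}$ (finitely many by the boundedness built into Proposition \ref{prop: second stra}), yields a further finite semi-algebraic refinement of $B_\alpha$ on which K-polystability of every member of $\widehat{\mG}$ is constant.

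Finally, taking the common refinement of all these face-wise refinements over all chambers $P_s$ and all pieces $B_\alpha$, and discarding empty pieces, gives the desired finite decomposition $P=\cup_i A_i$ into mutually disjoint semi-algebraic sets on which both K-semistability and K-polystability of every $(X,\sum_j x_j D_j)$ with $(X,\sum_j D_j)\in\widehat{\mG}$ are constant. The main obstacle I anticipate is the K-polystability step: unlike K-semistability, which is a closed condition cut out by non-negativity of Futaki invariants, K-polystability is the conjunction of K-semistability with the failure of certain \emph{strict} inequalities and with a product-type condition on test configurations, so one must show that "being of product type" varies semi-algebraically in the coefficient vector $\fx$ — this requires controlling how $\Aut(X,\sum_j x_j D_j)$ and the induced one-parameter subgroups behave as $\fx$ moves, which is exactly where the uniform boundedness of $\widehat{\mG}$ (and of the auxiliary Hilbert-scheme families) is essential. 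The remaining bookkeeping — intersecting finitely many semi-algebraic sets, refining, and checking that semi-algebraicity is preserved — is routine.
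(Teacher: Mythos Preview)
Your K-semistability step is correct and matches the paper exactly: combine Proposition~\ref{prop: finite kss domain} with Proposition~\ref{prop: semi-alg-2} and take the Boolean refinement generated by the finitely many semi-algebraic sets $K_a$.

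Your K-polystability step, however, has a genuine gap. The claim that $\widehat{\mG}_{B_\alpha}^{\Kss}$ consists of ``finitely many couples'' is false: log boundedness (which is all Proposition~\ref{prop: second stra} gives you) means the set is parametrized by a finite-type scheme, not that it is a finite set. Refining $B_\alpha$ separately for each couple therefore need not terminate in a finite decomposition. Two further issues compound this. First, the reduction to finitely many one-parameter subgroups $\rho$ carried out in Proposition~\ref{prop: semi-alg-2} is designed to detect K-\emph{instability} (one $\rho$ with $\Fut<0$ suffices), not to enumerate all special test configurations with $\Fut=0$ and K-semistable central fiber, which is what the K-polystability test requires. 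Second, the assertion that the locus where a given $\rho$ yields a product-type degeneration is semi-algebraic in $\fx$ is exactly the delicate point you flag at the end, and you give no argument for it.

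The paper avoids all of this with a much shorter argument. After producing the pieces $B_\alpha$, it refines once more using Proposition~\ref{prop: uni gorenstein}, so that on each resulting piece $A_i$ any $\bR$-Gorenstein family of K-semistable log Fano pairs with fibers in $\widehat{\mG}$, valid at one $(c_1,\dots,c_k)\in A_i$, remains an $\bR$-Gorenstein family of log Fano pairs for every $(x_1,\dots,x_k)\in A_i$. K-polystability is then shown to be constant on $A_i$ by a single contradiction applied uniformly to all couples at once: if $(X,\sum c_jD_j)$ is K-polystable but $(X,\sum b_jD_j)$ is not (both $b,c\in A_i$), take the non-product special test configuration at $b$ with K-polystable central fiber; the central fiber couple lies in $\widehat{\mG}$, hence is K-semistable throughout $A_i$, and by the refinement the same family is a special test configuration at $c$ with K-semistable central fiber. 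This contradicts the K-polystability of $(X,\sum c_jD_j)$. No per-couple bookkeeping, no enumeration of one-parameter subgroups, and no semi-algebraicity of the product-type locus are needed.
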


\begin{proof}
By Proposition \ref{prop: finite kss domain} and Proposition \ref{prop: semi-alg-2}, one could easily produce a finite semi-algebraic chamber decomposition $P=\cup_i A_i$ such that for any $(X, \sum_{j=1}^kD_j)\in \widehat{\mG}$ and any piece $A_i$, the K-semistability of $(X, \sum_{j=1}^kx_jD_j)$ does not depend on $(x_1,...,x_k)\in A_i$. For the K-polystable part, we apply Proposition \ref{prop: uni gorenstein} to refine the chamber decomposition to satisfy the following property: 
for any piece $A_i$, if $(\mY, \sum_{j=1}^kc_j\mB_j)\to S$ is an $\bR$-Gorenstein family of K-semistable log Fano pairs over a smooth base $S$ for some $(c_1,...,c_k)\in A_i$ with $(\mY_t, \sum_{j=1}^k\mB_{j,t})\in \widehat{\mG}$ for any $t\in S$, then $(\mY, \sum_{j=1}^kx_j\mB_j)\to S$ is an $\bR$-Gorenstein family of log Fano pairs for any $(x_1,...,x_k)\in A_i$. We claim for any $(X, \sum_{j=1}^kD_j)\in \widehat{\mG}$ that $(X, \sum_{j=1}^kx_jD_j)$ is K-polystable for any $(x_1,...,x_k)\in A_i$ if $(X, \sum_{j=1}^kc_jD_j)$ is K-polystable for some $(c_1,...,c_k)\in A_i$.

Suppose $(X, \sum_{j=1}^kc_jD_j)$ is K-polystable for some $(c_1,...,c_k)\in A_i$, then we know $(X, \sum_{j=1}^kx_jD_j)$ is K-semistable for any $(x_1,...,x_k)$. Assume $(X, \sum_{j=1}^kb_jD_j)$ is not K-polystable for some $(b_1,...,b_k)\in A_i$. Then there exists a special test configuration $(\mX, \sum_{j=1}^kb_j\mD_j)\to \bA^1$ of $(X, \sum_{j=1}^kb_jD_j)$ such that the central fiber $(\mX_0, \sum_{j=1}^kb_j\mD_{j,0})$ is K-polystable. In particular, the test configuration is not of product type and $(\mX_0, \sum_{j=1}^kx_j\mD_{j,0})$ is K-semistable for any $(x_1,...,x_k)\in A_i$. By the refinement in the first paragraph, $(\mX, \sum_{j=1}^kx_j\mD_j)\to \bA^1$ is a special test configuration of $(X, \sum_{j=1}^kx_jD_j)$ for any $(x_1,...,x_k)\in A_i$. 
In particular, $(\mX, \sum_{j=1}^kc_j\mD_j)\to \bA^1$ is a K-semistable degeneration of the K-polystable log Fano pair $(X, \sum_{j=1}^kc_jD_j)$, thus of product type. This is a contradiction and the proof is complete.
\end{proof}

\section{Wall crossing}

Theorem \ref{thm: kps cd} leads to a wall crossing picture for K-moduli spaces parametrizing objects in $\widehat{\mG}$. We give a brief description in this section. For any given vector $\fx:=(x_1,...,x_k)\in P$, we consider the following subset
$$\widehat{\mG}^{\Kss}_{\fx}:=\{(X, \sum_{j=1}^kD_j)\in \widehat{\mG}\ |\ \text{$(X, \sum_{j=1}^kx_jD_j)$ is a K-semistable log Fano pair}\}. $$
By \cite[Theorem 1.3]{LXZ22} complemented by \cite{LZ24a}, there exists a finite type Artin stack $\mM^{\Kss}_{\widehat{\mG}, \fx}$ parametrizing the objects in $\widehat{\mG}^{\Kss}_{\fx}$, and $\mM^{\Kss}_{\widehat{\mG}, \fx}$ admits a proper good moduli space $M^{\Kps}_{\widehat{\mG}, \fx}$. Let $\mM^{\Kss,\red}_{\widehat{\mG}, \fx}$ be the moduli
functor associated to reduced schemes and $M^{\Kps,\red}_{\widehat{\mG}, \fx}$ the corresponding reduced good moduli space. Denote $\phi_\fx: \mM^{\Kss,\red}_{\widehat{\mG}, \fx}\to M^{\Kps,\red}_{\widehat{\mG}, \fx}$ the descending map. We have the following result.

\begin{theorem}\label{thm: wall crossing}
Under the assumption that $\widehat{\mG}$ is log bounded, there is a finite semi-algebraic chamber decomposition $P=\cup_i A_i$ such that 
for any given piece $A_i$, the descending map $\phi_\fx: \mM^{\Kss,\red}_{\widehat{\mG}, \fx}\to M^{\Kps,\red}_{\widehat{\mG}, \fx}$ does not depend on $\fx\in A_i$; moreover, for any $\fx\in A_i$ and $\fx_0\in \partial \bar{A_i}$ (i.e. $\fx_0$ lies on the boundary of the closure of $A_i$), we have the following wall crossing diagram:
\begin{center}
	\begin{tikzcd}[column sep = 2em, row sep = 1.5em]
	 \mM^{\Kss,\red}_{\widehat{\mG}, \fx} \arrow[d,"\phi_\fx", swap] \arrow[rr,"\Psi_{\fx, \fx_0}"]&& \mM^{\Kss,\red}_{\widehat{\mG}, \fx_0} \arrow[d,"\phi_{\fx_0}",swap]\\
	 M^{\Kps,\red}_{\widehat{\mG}, \fx}\arrow[rr,"\psi_{\fx, \fx_0}"]&& M^{\Kps,\red}_{\widehat{\mG}, \fx_0}
	\end{tikzcd}
\end{center}
where 
$\psi_{\fa, \fa_0}$ is the induced proper morphism produced by the universal property of good moduli spaces. 
\end{theorem}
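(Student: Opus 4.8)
The plan is to extract everything from Theorem \ref{thm: kps cd}. That result gives a finite semi-algebraic decomposition $P=\cup_i A_i$ on which, for every $(X,\sum_{j=1}^kD_j)\in\widehat{\mG}$, both K-semistability and K-polystability of $(X,\sum_{j=1}^kx_jD_j)$ are constant in $\fx\in A_i$. First I would refine this decomposition so that it is subordinate to a rational polytope decomposition as produced by Proposition \ref{prop: uni fano} and Proposition \ref{prop: uni gorenstein}; then, inside each $A_i$, the log Fano condition and the $\bR$-Cartier property of $-K_{\mY/S}-\sum_j x_j\mB_j$ are also independent of $\fx$, both for a single pair and for families over smooth bases. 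Since an $S$-point of $\mM^{\Kss,\red}_{\widehat{\mG},\fx}$ is an underlying datum $(\mY\to S,\ \mB_1,\dots,\mB_k)$ subject to conditions that, on $A_i$, no longer depend on $\fx$, the functors $\mM^{\Kss,\red}_{\widehat{\mG},\fx}$ are canonically identified for all $\fx\in A_i$; hence so are the good moduli spaces $M^{\Kps,\red}_{\widehat{\mG},\fx}$ and the descending maps $\phi_\fx$, which gives the first assertion.

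For the wall-crossing morphisms, fix a chamber $A_i$ and points $\fx\in A_i$, $\fx_0\in\partial\bar{A_i}$. The heart of the matter is a morphism of reduced stacks $\Psi_{\fx,\fx_0}\colon \mM^{\Kss,\red}_{\widehat{\mG},\fx}\to\mM^{\Kss,\red}_{\widehat{\mG},\fx_0}$, built on the level of moduli functors: given $(\mY,\sum_jx_j\mB_j)\to S$ defining an $S$-point of the source, I would show the same underlying datum defines an $S$-point of the target. The $\bR$-Cartier property of $-K_{\mY/S}-\sum_j x_j\mB_j$ survives the coefficient change because the $\bR$-Cartier domain is locally constant in the sense used in Proposition \ref{prop: second stra}; each fiber stays log Fano by the chamber refinement; and each fiber stays K-semistable because $\fx\mapsto\delta(\mY_t,\sum_j x_j\mB_{j,t})$ is continuous (e.g.\ \cite[Proposition 2.10]{LZ24a}) and is $\geq 1$ throughout $A_i$, hence $\geq 1$ at the limit point $\fx_0\in\bar{A_i}$. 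This is functorial in $S$ and compatible with the forgetful maps, so it defines $\Psi_{\fx,\fx_0}$.

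To produce $\psi_{\fx,\fx_0}$ I would compose $\Psi_{\fx,\fx_0}$ with $\phi_{\fx_0}$ and apply the universal property of good moduli spaces: since $\phi_\fx\colon\mM^{\Kss,\red}_{\widehat{\mG},\fx}\to M^{\Kps,\red}_{\widehat{\mG},\fx}$ is a good moduli space and $M^{\Kps,\red}_{\widehat{\mG},\fx_0}$ is an algebraic space, $\phi_{\fx_0}\circ\Psi_{\fx,\fx_0}$ factors uniquely through a morphism $\psi_{\fx,\fx_0}\colon M^{\Kps,\red}_{\widehat{\mG},\fx}\to M^{\Kps,\red}_{\widehat{\mG},\fx_0}$ fitting into the stated square. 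Properness of $\psi_{\fx,\fx_0}$ is then formal: both good moduli spaces are proper over $\bC$ by \cite{LXZ22} (complemented by \cite{LZ24a}), and a morphism from a proper algebraic space to a separated one is proper.

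The universal-property step and the pointwise constancy coming from Theorem \ref{thm: kps cd} are routine; the main obstacle is the family-level control of the invariants as $\fx$ crosses to a wall --- in particular the possible degeneration $\vol(-K_X-\sum_j x_j D_j)\to 0$ at boundary points of $\bar{A_i}$ on the Calabi-Yau boundary, where the fibers need no longer be log Fano. I expect to handle $\fx_0$ by taking the ambient polytope to be the closure of the face $F$ with $\fx\in F^\circ$, approaching $\fx_0$ through rational sub-polytopes $Q\subset F^\circ$, and passing through small $\bQ$-factorial modifications and anticanonical models as in Proposition \ref{prop: uni gorenstein}, so that the boundary case reduces to the interior case already controlled; the delicate point is checking that these operations are compatible with the moduli-functor description, so that $\Psi_{\fx,\fx_0}$ is a genuine morphism of stacks rather than merely a bijection on $\bC$-points.
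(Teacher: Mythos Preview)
Your approach is essentially the same as the paper's: the paper's proof is two sentences, invoking Theorem~\ref{thm: kps cd} for the chamber decomposition and citing \cite[Section 9.2]{LZ24b} for the construction of $\Psi_{\fx,\fx_0}$, while you spell out more of what that citation presumably contains. Your identification of the boundary issue (log Fano possibly failing at $\fx_0$, the need for Proposition~\ref{prop: uni gorenstein}-type control across faces) is exactly the content being outsourced to \cite{LZ24b}, and your sketch via small $\bQ$-factorial modifications and anticanonical models matches the method used there and in the proof of Proposition~\ref{prop: uni gorenstein}; the paper does not attempt an independent argument for this step.
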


\begin{proof}
Implied by Theorem \ref{thm: kps cd}.
The wall crossing morphism $\Psi_{\fx, \fx_0}$ is described in \cite[Section 9.2]{LZ24b}.
\end{proof}

\bibliography{reference.bib}

@book {Xu25,
    AUTHOR = {Xu, Chenyang},
     TITLE = {K-stability of {F}ano varieties},
    SERIES = {New Mathematical Monographs},
    VOLUME = {50},
 PUBLISHER = {Cambridge University Press, Cambridge},
      YEAR = {2025},
     PAGES = {xi+411},
      ISBN = {978-1-009-53877-0; [9781009538763]},
   MRCLASS = {14J45 (32Q26)},
  MRNUMBER = {4893062},
}

@article {ADL19,
    AUTHOR = {Ascher, Kenneth and DeVleming, Kristin and Liu, Yuchen},
     TITLE = {Wall crossing for {K}-moduli spaces of plane curves},
   JOURNAL = {Proc. Lond. Math. Soc. (3)},
  FJOURNAL = {Proceedings of the London Mathematical Society. Third Series},
    VOLUME = {128},
      YEAR = {2024},
    NUMBER = {6},
     PAGES = {Paper No. e12615, 113},
      ISSN = {0024-6115},
   MRCLASS = {14H10 (14H50 14J10 14J45 14L24)},
  MRNUMBER = {4757702},
}

@book {Kollar23,
    AUTHOR = {Koll\'{a}r, J\'{a}nos},
     TITLE = {Families of varieties of general type},
    SERIES = {Cambridge Tracts in Mathematics},
    VOLUME = {231},
      NOTE = {With the collaboration of Klaus Altmann and S\'{a}ndor J. Kov\'{a}cs},
 PUBLISHER = {Cambridge University Press, Cambridge},
      YEAR = {2023},
     PAGES = {xviii+471},
      ISBN = {978-1-009-34610-8},
   MRCLASS = {14J10 (14D20 14E30 14J29)},
  MRNUMBER = {4566297},
MRREVIEWER = {Chenyang Xu},
}

@misc{CLZ25,
      title={Variation of cones of divisors in a family of varieties -- {F}ano type case}, 
      author={Sung Rak Choi and Zhan Li and Chuyu Zhou},
      year={2025},
      eprint={2504.04109},
      archivePrefix={arXiv},
      primaryClass={math.AG},
      url={https://arxiv.org/abs/2504.04109}, 
}

@misc{LZ24a,
      title={K-moduli with real coefficients}, 
      author={Yuchen Liu and Chuyu Zhou},
      year={2024},
      eprint={2412.15723},
      archivePrefix={arXiv},
      primaryClass={math.AG},
      url={https://arxiv.org/abs/2412.15723}, 
}

@misc{LZ24b,
      title={Non-proportional wall crossing for {K}-stability}, 
      author={Yuchen Liu and Chuyu Zhou},
      year={2024},
      eprint={2412.15725},
      archivePrefix={arXiv},
      primaryClass={math.AG},
      url={https://arxiv.org/abs/2412.15725}, 
}

@misc{DJKHQ24,
      title={The {K}-moduli space of a family of conic bundle threefolds}, 
      author={Kristin DeVleming and Lena Ji and Patrick Kennedy-Hunt and Ming Hao Quek},
      year={2024},
      eprint={2403.09557},
      archivePrefix={arXiv},
      primaryClass={math.AG},
      url={https://arxiv.org/abs/2403.09557}, 
}

@misc{PT06,
      title={C{M} Stability and the Generalized {F}utaki Invariant {I}}, 
      author={Sean T. Paul and Gang Tian},
      year={2008},
      eprint={math/0605278},
      archivePrefix={arXiv},
      primaryClass={math.AG}
}

@article {Zhou23,
    AUTHOR = {Zhou, Chuyu},
     TITLE = {On wall-crossing for {K}-stability},
   JOURNAL = {Adv. Math.},
  FJOURNAL = {Advances in Mathematics},
    VOLUME = {413},
      YEAR = {2023},
     PAGES = {Paper No. 108857, 26},
      ISSN = {0001-8708},
   MRCLASS = {14J45 (14D23 14E30)},
  MRNUMBER = {4533746},
       DOI = {10.1016/j.aim.2022.108857},
       URL = {https://doi.org/10.1016/j.aim.2022.108857},
}

@article {LXZ22,
    AUTHOR = {Liu, Yuchen and Xu, Chenyang and Zhuang, Ziquan},
     TITLE = {Finite generation for valuations computing stability
              thresholds and applications to {K}-stability},
   JOURNAL = {Ann. of Math. (2)},
  FJOURNAL = {Annals of Mathematics. Second Series},
    VOLUME = {196},
      YEAR = {2022},
    NUMBER = {2},
     PAGES = {507--566},
      ISSN = {0003-486X},
   MRCLASS = {14J45 (14D20 14E99 32Q20)},
  MRNUMBER = {4445441},
       DOI = {10.4007/annals.2022.196.2.2},
       URL = {https://doi.org/10.4007/annals.2022.196.2.2},
}

@article {LWX21,
    AUTHOR = {Li, Chi and Wang, Xiaowei and Xu, Chenyang},
     TITLE = {Algebraicity of the metric tangent cones and equivariant
              {K}-stability},
   JOURNAL = {J. Amer. Math. Soc.},
  FJOURNAL = {Journal of the American Mathematical Society},
    VOLUME = {34},
      YEAR = {2021},
    NUMBER = {4},
     PAGES = {1175--1214},
      ISSN = {0894-0347},
   MRCLASS = {14B07 (14E30 14J17 14J45 53C55)},
  MRNUMBER = {4301561},
       DOI = {10.1090/jams/974},
       URL = {https://doi.org/10.1090/jams/974},
}

@article {Xu21,
    AUTHOR = {Xu, Chenyang},
     TITLE = {K-stability of {F}ano varieties: an algebro-geometric
              approach},
   JOURNAL = {EMS Surv. Math. Sci.},
  FJOURNAL = {EMS Surveys in Mathematical Sciences},
    VOLUME = {8},
      YEAR = {2021},
    NUMBER = {1-2},
     PAGES = {265--354},
      ISSN = {2308-2151},
   MRCLASS = {14D20 (14E30 14J10 14J45)},
  MRNUMBER = {4307210},
       DOI = {10.4171/emss/51},
       URL = {https://doi.org/10.4171/emss/51},
}

@article{LX14,
	Author = {Li, Chi and Xu, Chenyang},
	Doi = {10.4007/annals.2014.180.1.4},
	Fjournal = {Annals of Mathematics. Second Series},
	Issn = {0003-486X},
	Journal = {Ann. of Math. (2)},
	Mrclass = {14J45 (14E30 14J10 14J80)},
	Mrnumber = {3194814},
	Mrreviewer = {Anne-Sophie Kaloghiros},
	Number = {1},
	Pages = {197--232},
	Title = {Special test configuration and {K}-stability of {F}ano varieties},
	Url = {https://doi.org/10.4007/annals.2014.180.1.4},
	Volume = {180},
	Year = {2014},
	Bdsk-Url-1 = {https://doi.org/10.4007/annals.2014.180.1.4},
	Bdsk-Url-2 = {http://dx.doi.org/10.4007/annals.2014.180.1.4}}

@article{Li17,
	Author = {Li, Chi},
	Doi = {10.1215/00127094-2017-0026},
	Fjournal = {Duke Mathematical Journal},
	Issn = {0012-7094},
	Journal = {Duke Math. J.},
	Mrclass = {14B05 (13A18 14J45 52A27 53C25 53C55)},
	Mrnumber = {3715806},
	Mrreviewer = {Ruadha\'{\i} Dervan},
	Number = {16},
	Pages = {3147--3218},
	Title = {K-semistability is equivariant volume minimization},
	Url = {https://doi.org/10.1215/00127094-2017-0026},
	Volume = {166},
	Year = {2017},
	Bdsk-Url-1 = {https://doi.org/10.1215/00127094-2017-0026},
	Bdsk-Url-2 = {http://dx.doi.org/10.1215/00127094-2017-0026}}

@article{Fuj19,
	Author = {Fujita, Kento},
	Fjournal = {Journal f\"{u}r die Reine und Angewandte Mathematik. [Crelle's Journal]},
	Journal = {J. Reine Angew. Math.},
	Pages = {309--338},
	Title = {A valuative criterion for uniform {K}-stability of {$\Bbb{Q}$}-{F}ano varieties},
	Volume = {751},
	Year = {2019}}

@article{KM76,
	Author = {Knudsen, Finn Faye and Mumford, David},
	Doi = {10.7146/math.scand.a-11642},
	Fjournal = {Mathematica Scandinavica},
	Issn = {0025-5521},
	Journal = {Math. Scand.},
	Mrclass = {14H10 (14C05 14F05)},
	Mrnumber = {437541},
	Mrreviewer = {P. E. Newstead},
	Number = {1},
	Pages = {19--55},
	Title = {The projectivity of the moduli space of stable curves. {I}. {P}reliminaries on ``det'' and ``{D}iv''},
	Url = {https://doi.org/10.7146/math.scand.a-11642},
	Volume = {39},
	Year = {1976},
	Bdsk-Url-1 = {https://doi.org/10.7146/math.scand.a-11642},
	Bdsk-Url-2 = {http://dx.doi.org/10.7146/math.scand.a-11642}}
\end{document}